\newcounter{qcounter}
\newenvironment{abcs}{
  \begin{list}
  {(\arabic{qcounter}) ~}
  {
    \usecounter{qcounter}
    \setlength{\itemsep}{0in}         
    \setlength{\topsep}{0in}
    \setlength{\partopsep}{0in}
    \setlength{\parsep}{0.05in}
    \setlength{\leftmargin}{0.285in}   
    \setlength{\rightmargin}{0in}
    \setlength{\itemindent}{0in}
    \setlength{\labelsep}{0in}
    \setlength{\labelwidth}{\leftmargin}
    \advance \labelwidth by-\labelsep
  }
}{
  \end{list}
}
\newcommand{\cyc}{\mathbf{1} \boldsymbol{-} \boldsymbol{\zeta}}
\newcommand{\Z}{\mathbb{Z}}
\newcommand{\Q}{\mathbb{Q}}
\newcommand{\Zp}{\Z/{p\Z}}
\newcommand{\QpZp}{\Q_p/{\Z_p}}
\newcommand{\fH}{\mathfrak{H}}
\newcommand{\h}{\mathfrak{h}}
\newcommand{\fQ}{\mathcal{Q}}
\newcommand{\fP}{\mathcal{P}}
\newcommand{\fR}{\mathcal{R}}
\newcommand{\fO}{\mathcal{O}}
\newcommand{\tH}{\tilde{H}}
\newcommand{\Lam}{\Lambda}
\newcommand{\I}{\mathcal{I}}
\newcommand{\U}{\Upsilon}
\newcommand{\Th}{\Theta}
\newcommand{\bnu}{\overline{\nu}}
\newcommand{\chii}{{\chi^{-1}}}
\newcommand{\cL}{\mathcal{L}}
\newcommand{\Xc}{X^-_\chii}
\newcommand{\Hom}{\text{Hom}}
\newcommand{\Gal}{\text{Gal}}
\newcommand{\Frac}{\text{Frac}}
\newcommand{\Aut}{\text{Aut}}
\newcommand{\Ext}{\text{Ext}}
\newcommand{\HDM}{\tH_{DM}}
\newcommand{\X}{\mathfrak{X}}
\newcommand{\zinf}{\{0,\infty\}}
\newtheorem{thm}{Theorem}[section]
\newtheorem{cor}[thm]{Corollary}
\newtheorem{prop}[thm]{Proposition}
\newtheorem{lem}[thm]{Lemma}
\theoremstyle{definition}
\theoremstyle{remark}
\newtheorem{rem}[thm]{Remark}
\newtheorem{rems}[thm]{Remarks}
\let\c@equation\c@thm
\numberwithin{equation}{section}
\title{Hecke algebras associated to $\Lam$-adic modular forms}
\author{Preston Wake}
\date{}
\begin{document}

\maketitle

\begin{abstract}
We show that if an Eisenstein component of the $p$-adic Hecke algebra associated to modular forms is Gorenstein, then it is necessary that the plus-part of a certain ideal class group is trivial. We also show that this condition is sufficient whenever a conjecture of Sharifi holds.
\end{abstract}

\section{Introduction}

In this paper, we study whether the Eisenstein component of the $p$-adic Hecke algebra associated to modular forms is Gorenstein, and how this relates to the theory of cyclotomic fields. We will first prepare some notation in order to state our results. See section \ref{notation remarks} for some remarks on the notation.

\subsection{Notation}\label{notation} 

Let $p \ge 5$ be a prime and $N$ an integer such that $p \nmid \varphi(N)$ and $p \nmid N$. Let $\theta: (\Z/{Np \Z})^\times \to \overline{\Q}_p^\times$ be an even character and let $\chi=\omega^{-1} \theta$, where $\omega : (\Z/{Np \Z})^\times \to (\Z/p\Z)^\times \to \Z_p^\times$ denotes the Teichm\"{u}ller character. We assume that $\theta$ satisfies the same conditions as in \cite{sharifi} and \cite{kato-fukaya} -- namely that 1) $\theta$ is primitive, 2) if $\chi |_{(\Zp)^\times}=1$, then $\chi |_{(\Z/{N\Z})^\times}(p) \ne 1$,  3) if $N=1$, then $\theta \ne \omega^2$.

If $\phi: G \to \overline{\Q}_p^\times$ is a character of a group $G$, let $\Z_p[\phi]$ denote the $\Z_p$-algebra generated by the values of $\phi$, on which $G$ acts through $\phi$. If $M$ is a $\Z_p[G]$-module, denote by $M_\phi$ the $\phi$-eigenspace:
$$
M_\phi = M \otimes_{\Z_p[G]} \Z_p[\phi].
$$

Let $$
H'=\varprojlim H^1(\overline{X}_1(Np^r),\Z_p)^{ord}_{\theta}$$
and
$$
\tH'= \varprojlim H^1(\overline{Y}_1(Np^r),\Z_p)^{ord}_{\theta},$$
where $ord$ denotes the ordinary part for the dual Hecke operator $T^*(p)$, and the subscript refers to the eigenspace for the diamond operators.

Let $\h'$ (resp. $\fH'$) be the algebra of dual Hecke operators acting on $H'$ (resp. $\tH'$). Let $I$ (resp. $\I$) be the Eisenstein ideal of $\h'$ (resp. $\fH'$). Let $\fH$ denote the {\em Eisenstein component} $\fH = \fH'_\mathfrak{m}$ the localization at the unique  maximal ideal $\mathfrak{m}$ containing $\I$. We can define the Eisenstein component $\h$ of $\h'$ analogously. Let $\tH=\tH' \otimes_{\fH'} \fH$ and $H=H' \otimes_{\h'} \h$, the Eisenstein components.

Let $G_\Q = \Gal(\overline{\Q}/\Q)$. For a $G_\Q$-module $M$, let $M^+$ and $M^-$ for the eigenspaces of complex conjugation.

Let $\Q_\infty=\Q(\zeta_{Np^\infty})$; let $M$ be the maximal abelian $p$-extension of $\Q_\infty$ unramified outside $Np$ and let $L$ be  the maximal abelian $p$-extension of $\Q_\infty$ unramified everywhere. Let $\X=\Gal(M/\Q_\infty)$ and $X=\Gal(L/\Q_\infty)$.

Let $\Z_{p,N}^\times = \Z_p^\times \times (\Z/N\Z)^\times$ and $\Lam=\Z_p[[\Z_{p,N}^\times]]_\theta$, the Iwasawa algebra. We use the identification $\Lam \cong \Z_p[[\Gal(\Q_\infty/\Q)]]_\theta$ to give an action of $\Lam$ on $\Gal(\Q_\infty/\Q)$-modules. Let $\xi \in \Lam$ be a characteristic power series of $X_\chi^-$. For more on this, see \cite{washington}. Let $\iota: \Z_p[[\Z_{p,N}^\times]] \to \Z_p[[\Z_{p,N}^\times]]$ by the involution given by $c \mapsto c^{-1}$ on $\Z_{p,N}^\times$. Let $\tau : \Z_p[[\Z_{p,N}^\times]] \to \Z_p[[\Z_{p,N}^\times]]$ be the morphism induced by $\langle c \rangle \mapsto c\langle c \rangle$ for $c \in \Z_{p,N}^\times$. For a $\Z_p[[\Z^\times_{p,N}]]$-module $M$, we let $M^\#$ (resp. $M(r)$) be the same abelian group with $\Z_p[[\Z^\times_{p,N}]]$-action changed by $\iota$ (resp. $\tau^r$).

\subsection{Statement of Results} Many authors have studied the Gorenstein property of Hecke algebras, and its relationship to arithmetic (cf. \cite{kurihara}, for example).  Ohta has the following theorem.

\begin{thm}{\em (Ohta, \cite{comp2})}\label{ohtaresult}
Suppose that $\X^+_\theta = 0$. Then $\fH$ is Gorenstein.
\end{thm}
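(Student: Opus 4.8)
\emph{Overview and reduction to freeness.} The plan is to translate the Gorenstein property into the freeness of a complex-conjugation eigenspace of the cohomology, and then to control that freeness by the hypothesis $\X^+_\theta = 0$. Since $\fH$ is finite and flat over the regular local ring $\Lam$, it is Gorenstein precisely when its relative dualizing module $\Hom_\Lam(\fH, \Lam)$ is free of rank one over $\fH$, so the first task is to realize this module cohomologically. The interior cohomology $H$ is self-dual under Poincar\'e duality, and because complex conjugation reverses the orientation of the modular curve, the cup product annihilates $H^+ \times H^+$ and $H^- \times H^-$ and restricts to a perfect, Hecke-adjoint pairing $H^+ \times H^- \to \Lam$, up to a Tate twist and the involution $\#$. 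Hence $H^+ \cong \Hom_\Lam(H^-, \Lam)^\#$ as $\fH$-modules. I would then invoke the structure theory of the ordinary cohomology to see that $H^-$ is free of rank one over $\fH$ -- this is the ``Eisenstein'' side, whose Iwasawa-theoretic description is governed by $\xi$ and the quotient $\fH/\I$. With $H^-$ free, the displayed isomorphism identifies $H^+$ with $\Hom_\Lam(\fH,\Lam)^\#$, the dualizing module, so that $\fH$ is Gorenstein if and only if $H^+$ is free of rank one.

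\emph{Bringing in $\X^+_\theta$.} It remains to prove $H^+$ cyclic -- equivalently, by $\Lam$-flatness and Nakayama's lemma, free of rank one -- under the assumption $\X^+_\theta = 0$. For this I would analyze $\tH$ as a $G_\Q$-module: it is a lattice in an ordinary $\fH$-adic representation $\rho$ which is reducible modulo $\I$, with Jordan--H\"{o}lder constituents the two characters of the Eisenstein family, and whose restriction to a decomposition group at $p$ carries the ordinary filtration with unramified quotient line. Feeding this filtration into global class field theory over $\Q_\infty$, the relevant eigenspace is governed by the $\theta$-part of $\X = \Gal(M/\Q_\infty)$, the maximal abelian $p$-extension of $\Q_\infty$ unramified outside $Np$; concretely, the obstruction to $H^+/\mathfrak{m}H^+$ being one-dimensional should be a subquotient of $\X^+_\theta$. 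When $\X^+_\theta = 0$ there are no spurious extension classes, $\rho$ is forced to be non-split in the required direction, and $H^+$ becomes cyclic, completing the argument.

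\emph{Main obstacle.} The decisive step is the identification, in the second paragraph, of the freeness obstruction for $H^+$ with $\X^+_\theta$, including the correct bookkeeping of signs through the Tate twist that matches the geometric eigenspace to the arithmetic plus-part. This requires the full integral structure theory of the $\Lam$-adic ordinary cohomology as a Galois module -- in particular a control theorem describing $\tH$ and its ordinary filtration over $\Lam$ -- together with a careful translation, via Poitou--Tate duality and class field theory, of the pertinent first Galois cohomology group into the Iwasawa module $\X$. A further subtlety that must be managed throughout is the passage between the open curve, which yields the faithful $\fH$-module $\tH$, and the compactified curve, which yields the self-dual interior cohomology $H$: the cuspidal boundary contributes a free $\Lam$-module that, together with the comparison of $\fH$ with the Hecke algebra $\h$ acting on $H$, must be separated off so that the duality of the first step is compatible with the Galois-theoretic computation of the second.
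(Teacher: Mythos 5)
First, a remark on scope: the paper does not prove this statement --- it is quoted from Ohta \cite{comp2} --- so your attempt can only be compared with Ohta's argument and with the structural facts the paper imports in \S\ref{bc}. Your first paragraph is essentially that imported structure, but with the eigenspaces reversed: in this paper's normalization it is $H^+=\tH^+$ that is \emph{unconditionally} free of rank one over $\h$, while the dualizing module of $\fH$ is $\tH^-\cong\Hom_\Lam(\fH,\Lam)$, the minus part of the cohomology of the \emph{open} curve. Your Poincar\'e duality pairing on the interior cohomology $H$ alone would at best characterize Gorenstein-ness of $\h$, not of $\fH$; to reach $\fH$ one must bring in the boundary sequence $0\to H\to\tH\to\Lam\to 0$ and the class $\zinf$, which you flag as a ``subtlety'' but do not carry out. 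These points are repairable bookkeeping, though they matter here because the sign conventions are exactly what the rest of the paper is built on.

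The genuine gap is in your second paragraph. You assert that the obstruction to cyclicity of the relevant eigenspace ``should be a subquotient of $\X^+_\theta$,'' but the cocycles extracted from the mod-$\I$ reducible representation do not map out of $\X^+_\theta$: in this paper's conventions they give $\Th:\X^-_\chii\to\Hom(\fR,\fQ)$ and $\U:X^-_\chi\to\Hom(\fQ,\fP)$, i.e.\ they are governed by \emph{odd} eigenspaces, and the freeness obstruction is the surjectivity of $\Phi$ (Proposition \ref{surj}). The even hypothesis $\X^+_\theta=0$ only reaches these modules through a reflection step --- Iwasawa adjunction $X^-_\chii\cong \text{E}^1(\X^+_\theta)$ (Corollary \ref{Xadjoints}), equivalently the Kummer duality $E^+_\theta\cong\Hom_\Lam(\X^-_\chii,\Lam^\#)$ of \S\ref{kummer pairing} --- combined with the relation $BC=I$ coming from Ohta's Eichler--Shimura theory and the main conjecture (Proposition \ref{image}). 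Your sketch contains no such step, and without it $\X^+_\theta=0$ gives no purchase on the representation; moreover ``there are no spurious extension classes, so $\rho$ is forced to be non-split'' is not an argument --- vanishing of the relevant $\Ext$ group would, if anything, force splitting. The economical correct route through the results the paper quotes is: $\X^+_\theta=0$ forces $X^-_\chii=\text{E}^1(\X^+_\theta)=0$, hence $X^-_\chii$ is cyclic, hence $\fH$ is Gorenstein by Ohta's equivalence \cite[Corollary 4.2.13]{comp2}; but that equivalence (or the surjectivity criterion of Proposition \ref{surj}) is precisely the content your outline leaves unproved.
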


Skinner and Wiles have obtained similar results using different methods \cite{skinner-wiles}. This is a sufficient condition for $\fH$ to be Gorenstein. The main result of this paper gives a necessary condition that is conjecturally also sufficient.

\begin{thm}
\label{main}
Suppose that $X^-_\chi \neq 0$. Then
\abcs
\item If $\fH$ is Gorenstein, then $X^+_\theta=0$. 
\item If Sharifi's conjecture is true, then $X^+_\theta=0$ implies $\fH$ is Gorenstein.
\endabcs
\end{thm}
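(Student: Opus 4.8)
The plan is to reduce the Gorenstein property of $\fH$ to the freeness of a single Hecke module, and then to translate that freeness into the vanishing of $X^+_\theta$ by means of the Galois representation carried by $\tH$. First I would recall Ohta's structural results (the ingredients behind Theorem \ref{ohtaresult}, \cite{comp2}): since $p$ is odd, complex conjugation splits $\tH = \tH^+ \oplus \tH^-$; Poincaré duality on the tower $\overline{Y}_1(Np^r)$ furnishes a perfect $\Lam$-bilinear pairing, which (because the relevant Tate twist makes conjugation act by $-1$) pairs the two eigenspaces against each other, giving $\tH^+ \cong \Hom_\Lam(\tH^-,\Lam)^\#$; and $\tH^-$ is free of rank one over $\fH$. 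Combining these, $\tH^+ \cong \Hom_\Lam(\fH,\Lam)^\#$ is a $\#$-twist of the relative dualizing module, so that $\fH$ is Gorenstein if and only if $\tH^+$ is free of rank one over $\fH$. Both parts of the theorem thereby become statements about the freeness of $\tH^+$.

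Next I would extract the arithmetic content from the $G_\Q$-action on $\tH$. Modulo the Eisenstein ideal $\I$ the representation is reducible, and the $\theta$-part of its off-diagonal cocycle defines an extension class in a Galois cohomology group; the ordinary filtration, recorded by the Drinfeld--Manin splitting $\HDM$, should force this class to be unramified everywhere on the plus side, so that class field theory produces a canonical map relating $\tH^+$ to $X^+_\theta$ rather than to the larger module $\X^+_\theta$ appearing in Ohta's sufficient condition. The hypothesis $X^-_\chi \ne 0$ guarantees, via the Iwasawa Main Conjecture, that $\xi$ is a non-unit and hence that $\fH/\I \cong \Lam/\xi$ is nonzero, which is what keeps this map non-degenerate and makes the translation faithful. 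For part (1) I would then argue unconditionally that freeness of $\tH^+$ (equivalently, principality of the dualizing module $\Hom_\Lam(\fH,\Lam)$) is incompatible with a nonzero $X^+_\theta$: any nontrivial element of $X^+_\theta$ obstructs principality of the dualizing module, so the Gorenstein hypothesis forces $X^+_\theta = 0$.

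For part (2) I would prove the converse by exhibiting a generator of $\tH^+$. Ohta's theorem already yields the conclusion from the stronger $\X^+_\theta = 0$, but under the weaker hypothesis $X^+_\theta = 0$ I would invoke Sharifi's conjecture. Together with Fukaya--Kato's map $\U$, the conjecture supplies an explicit isomorphism between the pertinent cuspidal eigenspace and an Iwasawa module of characteristic ideal $\xi$; transporting this isomorphism through the duality of the first paragraph identifies $\tH^+$ with a cyclic $\fH$-module. A length-and-rank comparison, using $X^-_\chi \ne 0$ to pin down the characteristic ideal on the cuspidal side and $X^+_\theta = 0$ to kill the residual defect, should then upgrade cyclicity to freeness, whence $\fH$ is Gorenstein.

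The step I expect to be the main obstacle is the local analysis at $p$ in the second paragraph: controlling the ramification of the Eisenstein cocycle so that one lands in $X^+_\theta$ (unramified everywhere) and not merely in $\X^+_\theta$ (unramified outside $Np$) is precisely what sharpens Ohta's sufficient condition into a necessary and sufficient one, and it demands a careful comparison of the ordinary condition at $p$ with the local conditions defining $X$ versus $\X$. A secondary difficulty is the commutative-algebra step in part (2), where passing from cyclicity to freeness relies on matching characteristic ideals exactly; this is where the input from Sharifi's conjecture must be used in its precise form as an isomorphism rather than a mere surjection.
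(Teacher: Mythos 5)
Your reduction in the first paragraph is stated backwards, and as written it is circular. Ohta's structural result (recalled in \S\ref{bc} of the paper) is that $\tH^+ = H^+$ is \emph{always} free of rank one over $\h$, while $\tH^- \cong \Hom_\Lam(\fH,\Lam)$ is the dualizing module; consequently $\fH$ is Gorenstein if and only if $\tH^-$ (not $\tH^+$) is free of rank one over $\fH$. You instead list ``$\tH^-$ is free of rank one over $\fH$'' among the known inputs — but that is precisely the Gorenstein condition you are trying to characterize, so the equivalence you derive from it proves nothing. More seriously, the mechanism you propose for part (1) is not an argument: ``any nontrivial element of $X^+_\theta$ obstructs principality of the dualizing module'' is a restatement of the theorem. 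The paper's actual route is: (i) Gorenstein-ness is equivalent to surjectivity of the map $\Phi(\tau\otimes\sigma)=\U(\sigma)\circ\Th(\tau)(\zinf)$, because the image of $\Phi$ is $I\zinf$ and $\fP = H^-/IH^-$ is generated by $I\zinf$ exactly when $\tH^-$ is cyclic; (ii) the Fukaya--Kato theorem that the extension class of $\fR$ by $\fQ$ inside $\HDM$ is \emph{given by cyclotomic units} identifies $\Th$ with the pairing map $\nu':\X^-_\chii\to\Lam^\#/\xi$ against $\cyc$; (iii) Coleman power series, Iwasawa's theorem, and the theory of Iwasawa adjoints produce the exact sequence $0 \to E^+_\theta/C^+_\theta \to \Lam/\xi_\chii \to \X^+_\theta \to X^+_\theta \to 0$, from which surjectivity of $\bnu$ is equivalent to $X^+_\theta=0$ (via the main conjecture). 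Steps (ii) and (iii) are the entire content of the theorem and are absent from your proposal.

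Your proposed substitute for (iii) — that the ordinary filtration forces the Eisenstein cocycle to be unramified everywhere, so that class field theory maps directly into $X^+_\theta$ — does not work. The relevant extension class lives in $H^1(\Z[1/Np],\Lam^\#/\xi(1))$ and genuinely involves the ramified module $\X^-_\chii$ (it is $\Th$, defined on $\X^-_\chii$, not on $X^-_\chii$); the everywhere-unramified module $X^+_\theta$ enters only after dualizing, through the identification $X^-_\chii\cong \mathrm{Ext}^1_\Lam(\X^+_\theta,\Lam)$ and the four-term sequence above, not through a local condition on the cocycle. You have therefore located the difficulty in the wrong place. Two smaller points: for part (2) the paper needs only that $\U$ is a \emph{surjection} (this is what ``Sharifi's conjecture'' means there), not an isomorphism, contrary to your closing remark; and the case $\X^+_\theta=0$ must be split off and handled by Ohta's Theorem \ref{ohtaresult}, since the Nakayama argument that upgrades surjectivity of $\bnu$ to surjectivity of $\nu$ requires $\xi_\chii\notin\Lam^\times$, i.e.\ $\X^+_\theta\ne 0$.
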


\begin{rems}\label{mainrem}
By Sharifi's conjecture, we mean the statement that the map $\U$, described in section \ref{up and theta} below, is a surjection. In fact, it follows from \cite[Conjectures 4.12, 5.2 and 5.4]{sharifi} that $\U$ is an isomorphism. For partial results on Sharifi's conjecture, see \cite{kato-fukaya}.

 Note that $X^+_\theta$ is a quotient of $\X^+_\theta$, which appears in Ohta's result. If Sharifi's conjecture is true, then this is a stronger result.

 The assumption $X^-_\chi \ne 0$ is just to make the result interesting. Note that $X^-_\chi = 0$ if and only if $\xi \in \Lam^\times$. However, $\h/I \cong \Lam/\xi$ \cite[Corollary A.2.4]{cong}, so $\xi \in \Lam^\times$ if and only if $\h=I=0$. Since $\fH/\I \cong \Lam$, if $I=0$ then $\fH=\Lam$. Thus the question of whether $\fH$ is Gorenstein is only interesting if $X^-_\chi \ne 0$.

\end{rems}

From Ohta's and Skinner-Wiles's results, one may hope that $\fH$ is always Gorenstein, but our result shows that this is not true.

\begin{cor}\label{maincor}
The ring $\fH$ is not always Gorenstein.
\end{cor}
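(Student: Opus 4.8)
The plan is to deduce the corollary from part (1) of Theorem \ref{main}, whose contrapositive reads: if $(p,N,\theta)$ satisfies the running hypotheses and one has both $X^-_\chi \neq 0$ and $X^+_\theta \neq 0$, then $\fH$ is not Gorenstein. Thus it suffices to exhibit a single admissible triple with these two eigenspaces nonzero. Since part (1) is unconditional, the resulting counterexample requires no appeal to Sharifi's conjecture, so the corollary will be unconditional.

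First I would reduce the two required nonvanishings to a single one. Because $\theta$ is even and $\chi = \omega^{-1}\theta$, a direct computation gives $\omega\theta^{-1} = \chi^{-1}$, and the reflection theorem of Leopoldt--Gras---applied in its Iwasawa-module form to $X^\pm$, or at finite level to the $p$-class group of $\Q(\zeta_{Np})$---yields an inequality of the shape $\dim X^+_\theta \le \dim X^-_{\omega\theta^{-1}} = \dim X^-_{\chi^{-1}}$. Hence $X^+_\theta \neq 0$ already forces $X^-_{\chi^{-1}} \neq 0$. Finally, the involution $\#$ (equivalently, the functional equation of the $p$-adic $L$-function together with the Iwasawa main conjecture) relates the $\chi$- and $\chi^{-1}$-parts, so that $X^-_{\chi^{-1}} \neq 0$ if and only if $X^-_\chi \neq 0$. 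Consequently it is enough to produce one admissible $(p,N,\theta)$ with $X^+_\theta \neq 0$.

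The remaining point, and the main obstacle, is to exhibit such a plus-part nonvanishing. Here $N=1$ cannot be used: there $\theta = \omega^k$, and $X^+_{\omega^k}\neq 0$ would be a failure of Vandiver's conjecture, none of which is known. For $N>1$, however, the analogue of Vandiver's conjecture does fail, and one can read off an example from explicit computations of real cyclotomic class numbers---that is, a prime $p\ge 5$ and a modulus $Np$ with $p \nmid N\varphi(N)$ for which the $\theta$-part of the $p$-class group of $\Q(\zeta_{Np})^+$ is nonzero. I would select such an example, verify that the relevant $\theta$ meets all the admissibility conditions (primitive, even, and conditions (1)--(3)), and conclude $(A_0^+)_\theta \neq 0$; since $X^+ \twoheadrightarrow A_0^+$, this gives $X^+_\theta \neq 0$.

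Assembling the steps: the example supplies $X^+_\theta \neq 0$; reflection together with the $\chi \leftrightarrow \chi^{-1}$ symmetry supplies $X^-_\chi \neq 0$; and Theorem \ref{main}(1) then shows $\fH$ is not Gorenstein for this triple, which proves the corollary. I expect the only genuine difficulty to lie in sourcing and verifying the concrete plus-part example, since the minus-part nonvanishing and the overall logical skeleton are formal once the reflection inequality is in hand.
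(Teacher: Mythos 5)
Your overall skeleton (exhibit an admissible $(p,N,\theta)$ with $X^+_\theta\neq 0$ and $X^-_\chi\neq 0$, then apply the contrapositive of Theorem \ref{main}(1)) matches the paper, but there is a genuine gap in how you produce the two nonvanishings. The gap is the claimed equivalence $X^-_{\chii}\neq 0 \Leftrightarrow X^-_{\chi}\neq 0$. No such symmetry exists: the involution $\#$ only twists the $\Lam$-module structure on a fixed group and does not interchange the $\chi$- and $\chii$-eigenspaces of $X^-$, and there is no functional equation relating the Kubota--Leopoldt $p$-adic $L$-functions of $\chi$ and of $\chii$; these eigenspaces are independent unless $\chi=\chii$ (i.e.\ $\chi$ quadratic) or one moves within a $\Gal(\Q_p(\chi)/\Q_p)$-orbit, which $\chi\mapsto\chii$ need not be. Concretely, for $p=37$, $N=1$: $37\mid B_{32}$ gives $X^-_{\omega^{5}}\neq 0$, while $37\nmid B_{6}$ gives $X^-_{\omega^{-5}}=X^-_{\omega^{31}}=0$. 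So your reflection step delivers $X^-_{\chii}\neq 0$, which is not the hypothesis Theorem \ref{main} requires. Patching this by insisting $\chi$ be quadratic is possible, but then $\theta=\omega\chi$ cuts out a real abelian field of degree $>2$ and conductor $Np$, and your remaining task --- directly verifying $X^+_\theta\neq 0$ from ``explicit computations of real cyclotomic class numbers'' --- is a Vandiver-type computation that is notoriously hard and for which you cite no actual example.

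The paper's proof is designed precisely to avoid ever exhibiting a plus-part example directly, and to make the $\chi$-versus-$\chii$ issue disappear. It chooses $\chi$ to be an \emph{odd quadratic} character (so $X^-_{\chii}=X^-_\chi$ automatically) attached to an imaginary quadratic field $\Q(\sqrt{-Np})$ whose $p$-class group is \emph{non-cyclic}; such examples are read off from standard tables of imaginary quadratic class groups ($p=5$, $N=350267$). Non-cyclicity of $X^-_{\chii}$ forces $\X^+_\theta=\text{E}^1(X^-_{\chii})$ to be non-cyclic by the theory of Iwasawa adjoints (Corollary \ref{Xadjoints}), and a non-cyclic $\X^+_\theta$ cannot be a quotient of the cyclic module $\Lam/\xi_{\chii}$, so Lemma \ref{extseq} forces $X^+_\theta\neq 0$; the hypothesis $X^-_\chi\neq 0$ comes for free from non-cyclicity. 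In short: the paper deduces the (hard-to-compute) plus-part nonvanishing from a (easy-to-look-up) minus-part non-cyclicity, whereas your route both relies on an unjustified $\chi\leftrightarrow\chii$ symmetry and leaves the genuinely difficult computation unsourced.
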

\begin{proof}
We need only find $p$, $N$ and $\theta$ satisfying the assumptions of \ref{notation} and such that $X^+_\theta \ne 0$ and $X^-_\chi \neq 0$. 
To find an example, we first note that, by Iwasawa theory (cf. section 3), the condition $X^+_\theta \ne 0$ is implied by the condition that $\X_\theta^+$ is not cyclic as a $\Lam$-module. By the theory of Iwasawa adjoints (cf. section 4), $\X_\theta^+$ is not cyclic if $X_\chii^-$ is not cyclic. 

One way to search for examples is to find an odd primitive character $\chi$ of conductor $pN$ (with $p \nmid \varphi(N)$) and of order two such that $X^-_\chii=X^-_\chi$ is not cyclic (and hence not zero).

To find such a character, we look at tables of class groups of imaginary quadractic fields (\cite{tables}) and find that, for $p=5$ and $N=350 267$, the $p$-Sylow subgroup of the class group of $\Q(\sqrt{-Np})$ is not cyclic. Let $\chi: G_\Q \to \{\pm 1\} $ be the character associated to $\Q(\sqrt{-Np})$; since $\Q(\sqrt{-Np})$ embeds into $\Q(\zeta_{Np})$, this factors through $(\Z/Np\Z)^\times$. This is the required $\chi$.

\end{proof}

We also have applications to the theory of cyclotomic fields, as in \cite{comp2}.

\begin{cor}
Assume that the order of $\chi$ is $2$ and that $X^-_{\chi} \ne 0$. If $\fH$ is Gorenstein, then the following are true:
\begin{abcs}
\item\label{cyc} $X^-_\chi$ is cyclic.
\item $\h$ is a complete intersection.
\item $I$ is principal.
\item $\I$ is principal.
\end{abcs}
In particular, this holds when $X^+_\theta=0$ if Sharifi's conjecture is true.
\end{cor}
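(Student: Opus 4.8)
The engine of the argument is Theorem~\ref{main}(1): under the standing hypotheses that $\fH$ is Gorenstein and $X^-_\chi \neq 0$, it produces $X^+_\theta = 0$, and the plan is to deduce all four assertions from this single vanishing together with the structural results established earlier. Concretely, I would organize the proof as the chain
$$X^+_\theta = 0 \;\Longrightarrow\; \Xc \text{ cyclic} \;\Longrightarrow\; \I \text{ principal} \;\Longrightarrow\; I \text{ principal} \;\Longrightarrow\; \h \text{ a complete intersection},$$
so that (1) is the first link and (4), (3), (2) are read off in turn.

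For (1) I would simply reverse the two implications used in the proof of Corollary~\ref{maincor}. There, the Iwasawa theory of section~3 shows that $X^+_\theta \neq 0$ whenever $\X^+_\theta$ is not cyclic, and the theory of Iwasawa adjoints in section~4 shows that $\X^+_\theta$ is not cyclic whenever $\Xc$ is not cyclic. Reading both contrapositively, $X^+_\theta = 0$ forces $\X^+_\theta$ to be cyclic and then $\Xc$ to be cyclic, which is exactly (1).

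For the algebraic statements I would start from the identifications $\fH/\I \cong \Lam$ and $\h/I \cong \Lam/\xi$ (the latter with $\xi \neq 0$, since $X^-_\chi \neq 0$) and from a structural isomorphism relating the conormal module $\I/\I^2$ of the Eisenstein ideal to $\Xc$ up to the involution $\#$. Since cyclicity over $\Lam$ is preserved by $\#$, the cyclicity of $\Xc$ from (1) makes $\I/\I^2$ cyclic as a module over $\fH/\I \cong \Lam$, so Nakayama's lemma over the complete local ring $\fH$ furnishes a single generator for $\I$; this is (4). The restriction map $\fH \twoheadrightarrow \h$ carries the Eisenstein ideal $\I$ onto $I$ (both being generated by the same Hecke relations), so the image of a generator of $\I$ generates $I$, giving (3). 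Finally, writing $I = (\bar\alpha)$ and using $\h = \Lam\cdot 1 + I = \Lam\cdot 1 + \bar\alpha\h$ together with $\mathfrak{m}$-adic completeness, one finds $\h = \Lam[\bar\alpha]$ is monogenic over $\Lam$, hence a hypersurface $\Lam[[x]]/(g)$; as $\Lam$ is regular this is a complete intersection, which is (2).

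The hard part will be the middle link, converting the arithmetic cyclicity of $\Xc$ into the algebraic principality of $\I$. This rests on the precise structural isomorphism identifying $\I/\I^2$ with $\Xc$ up to $\#$ --- equivalently, on the Gorenstein hypothesis making the ambient cohomology free of the expected rank so that Nakayama applies cleanly --- and on verifying that the conormal module of the Eisenstein ideal is governed by the \emph{unramified} module $\Xc$ rather than the ramified $\X$; this is where the main work of the paper must be invoked. The remaining links are formal. For the closing remark, if $X^-_\chi = 0$ the conclusions hold vacuously, while if $X^-_\chi \neq 0$ then under Sharifi's conjecture Theorem~\ref{main}(2) upgrades $X^+_\theta = 0$ to the Gorenstein property of $\fH$, so the hypotheses of the corollary are met and (1)--(4) follow.
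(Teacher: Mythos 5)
Your treatment of item (1) and of the closing statement matches the paper: Theorem \ref{main}(1) gives $X^+_\theta=0$, reversing the two implications from the proof of Corollary \ref{maincor} gives the cyclicity of $X^-_\chii$, and Theorem \ref{main}(2) handles the ``in particular.'' The divergence, and the gap, is in how you obtain (2)--(4). The paper proves none of the implications in your chain: it invokes Ohta's result \cite[Corollary 4.2.13]{comp2}, which says that under the Gorenstein hypothesis the four conditions (1)--(4) are \emph{equivalent}, so that establishing (1) finishes everything. Your plan instead runs ``$X^-_\chii$ cyclic $\Rightarrow \I$ principal $\Rightarrow I$ principal $\Rightarrow \h$ complete intersection,'' and its pivotal step --- converting cyclicity of $X^-_\chii$ into principality of $\I$ via a ``structural isomorphism relating $\I/\I^2$ to $X^-_\chii$ up to $\#$'' --- is asserted, not proved. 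You defer it to ``the main work of the paper,'' but the main work here (the diagram (*), the maps $\nu'$, $\Th$, $\U$) constructs no such isomorphism; the needed input is exactly Ohta's theorem in \cite{comp2}, which you would have to cite or reprove. As written, that middle link is a genuine missing step, and the subsequent links inherit the problem.

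Two smaller issues. The inference ``$I=(\bar\alpha)$ and $\h=\Lam\cdot 1+I$ imply $\h$ is a hypersurface $\Lam[[x]]/(g)$'' silently uses that $\h$ is finite and free over $\Lam$, so that a single monic relation of the correct degree cuts out $\h$ exactly; this should be stated. And the claim that the conclusions ``hold vacuously'' when $X^-_\chi=0$ is not right for item (1), since $X^-_\chii$ is the $\chi^{-1}$-eigenspace, which differs from the $\chi$-eigenspace unless $\chi$ is quadratic; the intended reading, and the paper's, is simply that $X^-_\chi\ne 0$ remains a standing hypothesis in the final sentence.
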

\begin{proof}
By part 1 of Theorem \ref{main} we have that $X^+_\theta=0$. It follows from $X^+_\theta=0$  that $X^-_\chii$ is cyclic as in the proof of Corollary \ref{maincor}. Now, since $\chi$ is order 2, we have that $X^-_\chi=X^-_\chii$, and thus that item (1) is true. Since $\fH$ is Gorenstein, we have by \cite[Corollary 4.2.13]{comp2} that (1) - (4) are equivalent, and thus that (1) - (4) are true.

The last statement is immediate from part 2 of Theorem \ref{main}.
\end{proof}

\begin{rem}
This corollary has a version without the assumption that $\chi$ is order $2$: one has to assume that $X^-_\chi$ and $X^-_\chii$ are both non-zero, and that both $\fH$ and the analog of $\fH$ for $\theta'=\omega \chii$ are Gorenstein. 
\end{rem}

\subsection{Outline of the proof}

The idea of the proof comes from Kato and Fukaya's work on Sharifi's conjectures \cite{kato-fukaya}. They consider the Drinfeld-Manin modification $\tH_{DM}=\h \otimes_\fH \tH$ of $\tH$ and some subquotients: 
$$
\fR=\tH_{DM}/H, \ \fP=H^-/{IH^-}, \ \fQ=(H/IH)/\fP.
$$
There are canonical isomorphisms of $\Lambda$-modules
$$
\fR \cong \Lambda/\xi(-1) \ , \ \fQ \cong (\Lambda/\xi)^\#.
$$
Using Otha's theory of $\Lam$-adic Eichler-Shimura cohomology, one can show that $\tH^- \cong \Hom_\Lam(\fH,\Lam)$, and thus that $\fH$ is Gorenstein if and only if $\tH^-$ is free of rank one as an $\fH$-module.

We construct a commutative diagram:

\begin{equation*}\tag{*}
\xymatrix{
\X^-_\chii \otimes X_\chi^- \ar[d]^{\nu' \otimes \U} \ar[r]^-{\Th \otimes \U} & \Hom_{\h}(\fR,\fQ) \otimes \Hom_\h (\fQ,\fP) \ar[d] \\
 (\Lambda/\xi)^\#(1) \otimes \Hom_\h (\fQ,\fP)  \ar[r] & \fP(1).
}
\end{equation*}

The unlabeled maps are the natural ones (using the above canonical isomorphisms for $\fR$ and $\fQ$). 

In section 2, we describe the top horizontal map, and we show that $\fH$ is Gorenstein if and only if the clockwise map $\X_\chii^- \otimes X_\chi^- \to \fP(1)$ is surjective. In section 3, we describe the map $\nu'$ and show that the diagram is commutative.  It is known that if $\fH$ is Gorenstein, then $\U$ is an isomorphism. Since $X^-_\chi \ne 0$, we get that if $\fH$ is Gorenstein, then $\nu'$ is surjective. In section 4, we recall the theory of Iwasawa adjoints, and use this to show that $\nu'$ is surjective if and only if $X_\theta^+=0$. This completes the proof of the first statement.

In Sharifi's paper \cite{sharifi}, he formulates conjectures that imply that $\U$ is an isomorphism even without the hypothesis that $\fH$ is Gorenstein (cf. \cite{kato-fukaya}). If $\U$ is surjective and $X^+_\theta=0$, then $\nu'$ is also surjective, so the clockwise map $\X_\chii^- \otimes X_\chi^- \to \fP(1)$ is surjective, and so $\fH$ is Gorenstein.

\subsection{Remarks on the Notation}\label{notation remarks} There are many small choices of notational convention in this area of study, and it seems that every author has a different convention. The major result that is used in this paper comes from \cite{kato-fukaya}, so we align to their notation. 

We use the standard model for $X_1(M)$: as the moduli space for $(E, \Z/M\Z \subset E)$ elliptic curves with a subgroup isomorphic to $\Z/M\Z$. Sharifi and Ohta use the model that is a moduli space for $(E, \Z/M\Z(1) \subset E)$, which affects the Galois action (cf. Proposition \ref{gal}). In the notation of \cite{kato-fukaya}, we use the model $X_1(M)$, whereas Ohta and Sharifi use the model $X_1'(M)$.

 We use the algebra of dual Hecke operators $T^*(n)$, but the eigenspace with respect to the usual diamond operators $\langle a \rangle$. In effect, this means that our Hecke algebra $\fH$ would be denoted by $\fH_\mathfrak{m}^{\langle \chii \rangle}$ in \cite{sharifi}. We hope that, with this dictionary, the reader may now easily translate between the various papers mentioned herewith.

Finally, throughout the paper, we use notation such as $\X_\theta^+$, which is redundant since $\X_\theta^+=\X_\theta$. However, we find it helpful, since the plus-minus parts often determine the most important behaviour, and it can be difficult to remember the parity of the characters. We hope that this does not cause any confusion.

\subsection{Acknowledgments} The author would like to thank Professor Kazuya Kato for suggesting this problem, and for his constant encouragement and patient advice during the preparation of this paper. He would also like to thank Matthew Emerton, Takako Fukaya, Madhav Nori, Matsami Ohta, and especially Romyar Sharifi  for helpful comments and discussions about this work. He also thanks the anonymous referee for his diligent reading and his many constructive comments.

\section{Galois Actions}

In this section, we recall some results on $H/{IH}$. These results are mainly due to many authors -- Ohta, Mazur-Wiles, Sharifi, Fukaya-Kato. See \cite[sections 6.2-6.3]{kato-fukaya} for an excellent exposition, including proofs. The purpose of this section is to construct the clockwise map in the commutative diagram (*), and show that it is surjective if $\fH$ is Gorenstein.

\subsection{}\label{bc} Ohta has shown that there is a decomposition, using the action of $\Gal(\overline{\Q}_p/\Q_p)$, of $H$ into a direct sum of a dualizing $\h$-module and a free $\h$-module of rank 1 (\cite[section 4.2]{comp2}, for example). By \cite[Theorem 4.3]{sharifi}, Ohta's decomposition and the decomposition into plus and minus parts are the same. See \cite[Proposition 6.3.5]{kato-fukaya} for a simple and self-contained proof of this. This compatibility will be used without further comment in the remainder.

In particular, $\tilde{H}^+=H^+$ is a free $\h$-module of rank 1, $H^- \cong \Hom_\Lam(\h,\Lam)$, and $\tilde{H}^- \cong \Hom_\Lam(\fH,\Lam)$.

Let $\cL=\Frac(\Lam)$. For a $\Lam$-module M, we let $M_\cL=M\otimes_\Lam \cL$. It follows from the above that
$H^-_\cL$ and $(\HDM^-)_\cL$ are free $\h_\cL$-modules of rank 1.

\subsection{}\label{R and 0-infinity} By Theorem 1.5.5 of \cite{cong}, there is an exact sequence
$$\xymatrix{
0  \ar[r] & H \ar[r] & \tilde{H} \ar[r]^\delta & \Lambda \ar[r] & 0 ,
}
$$
where $\delta$ is the boundary at $0$-cusps (cf. \cite[6.2.5]{kato-fukaya}). Let $\zinf' \in \tilde{H}$ be the class corresponding via Poincare duality to the relative homology class of the path from $0$ to $\infty$, so that $\delta(\zinf')=1$. This implies that $\zinf'$ is a generator for $\tilde{H}/H \cong \Lambda \cong \fH/\I$. Denote by $\zinf$ the image of $\zinf'$ in $\fR$. We have that $\zinf$ is a generator of $\fR \cong \Lambda/\xi \cong \h/I$.

Note that $\fR$ is a torsion $\Lam$-module, so the inclusion $H^- \to \HDM^-$ induces an isomorphism after tensoring with $\h_\cL$.

\subsection{}\label{up and theta} The action of $G_\Q$ on $\fR, \ \fP, \ \fQ$ is known, and is summarized by the following

\begin{prop}\label{gal}
$\fP$ is a $G_\Q$-submodule of $H/{IH}$; $\sigma \in G_\Q$ acts as $\kappa(\sigma)^{-1}$ on $\fP$ and $\fR$, where $\kappa$ is the $p$-adic cyclotomic character, and as $\langle \sigma \rangle^{-1}$ on $\fQ$. 
\end{prop}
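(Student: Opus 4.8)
The plan is to read off all three Galois actions from the $G_\Q$-action on the étale cohomology groups $H^1(\overline{X}_1(Np^r),\Z_p)$, which exists because the modular curves have canonical models over $\Q$, together with Ohta's analysis of the two-step filtration on the ordinary part. First I would recall the connected-étale filtration: after applying the ordinary projector and passing to the limit, $H$ carries a $G_{\Q_p}$-stable exact sequence whose sub is, up to a cyclotomic twist, the piece coming from the connected component of the relevant $p$-divisible group, and whose quotient is unramified with Frobenius acting through the dual operator $T^*(p)$. By section \ref{bc} this filtration splits and agrees with the plus/minus decomposition, so I would identify $H^-$ (the dualizing module $\Hom_\Lam(\h,\Lam)$) with the sub and $H^+$ (free of rank one) with the unramified quotient.

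The decisive global input is the reducibility of the two-dimensional representation $\rho$ attached to $\h$ after reduction modulo the Eisenstein ideal $I$: because the Hecke eigenvalues become Eisenstein, $\rho \bmod I$ is globally reducible, so the local filtration extends to a genuine $G_\Q$-stable short exact sequence
\[
0 \to \fP \to H/IH \to \fQ \to 0,
\]
with $\fP=H^-/IH^-$ the sub and $\fQ=(H/IH)/\fP$ the quotient. This is exactly what exhibits $\fP$ as a $G_\Q$-submodule. To pin down the characters, I would note that on the unramified quotient $\fQ$ Frobenius acts by the dual Hecke eigenvalue, which on the $\theta$-eigenspace modulo $I$ is $\langle\sigma\rangle^{-1}$; since the determinant of $\rho$ is the product of the cyclotomic character with the inverse diamond character (coming from Poincaré duality on the cohomology), the complementary character on $\fP$ must be $\kappa^{-1}$. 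A parity check confirms the normalization: complex conjugation acts by $-1$ on $\fP$, matching $\kappa(c)^{-1}=-1$ and the minus-part, and by $\theta(-1)^{-1}=1$ on $\fQ$, matching the even character $\theta$ and the plus-part.

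For $\fR\cong \h/I$, generated by the class $\zinf$ of the path from $0$ to $\infty$ of section \ref{R and 0-infinity}, I would instead compute directly on the boundary. The cusps $0$ and $\infty$ and the local Tate-parameter structure transform through roots of unity, so $G_\Q$ acts on $\zinf$ by $\kappa^{-1}$; this is again consistent with the parity observation that complex conjugation reverses the path and hence acts by $-1$, matching $\kappa(c)^{-1}$.

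The main obstacle is the bookkeeping of normalizations rather than any single hard step. As flagged in section \ref{notation remarks}, the model $X_1$ used here differs from Ohta's and Sharifi's $X_1'$ by a twist that changes the Galois action by a power of $\kappa$, and we use the dual operators $T^*$ with the usual diamond operators $\langle\sigma\rangle$; keeping all of these straight is precisely what determines whether the exponents come out as $\kappa^{-1}$ and $\langle\sigma\rangle^{-1}$ rather than as their inverses or untwisted versions. The second delicate point is justifying that the filtration is $G_\Q$-stable globally and not merely $G_{\Q_p}$-stable, which is exactly where the reducibility of $\rho\bmod I$ (and hence the Eisenstein structure $\h/I\cong\Lam/\xi$) is essential. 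Since all of these ingredients are developed in \cite[sections 6.2--6.3]{kato-fukaya} and \cite{comp2}, the argument is ultimately a careful assembly of cited results.
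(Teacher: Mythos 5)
The paper offers no proof of this proposition: it is explicitly recalled as a known result (``due to many authors -- Ohta, Mazur--Wiles, Sharifi, Fukaya--Kato''), with \cite[sections 6.2--6.3]{kato-fukaya} cited for proofs. Your outline is a faithful reconstruction of that standard argument: Ohta's ordinary filtration, its agreement with the $\pm$-decomposition (section \ref{bc}), global reducibility of $\rho \bmod I$ making $\fP$ a $G_\Q$-submodule, a determinant computation to pin down the character on $\fP$, and the boundary/cusp computation for $\fR$. So the approach is the right one, and the parity checks you run are exactly the sanity checks one should do.

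One internal slip is worth fixing, precisely because you correctly identify the normalizations as the crux. You write that $\det\rho$ is ``the product of the cyclotomic character with the inverse diamond character,'' i.e.\ $\kappa\,\langle\cdot\rangle^{-1}$; but if $\fQ$ carries $\langle\cdot\rangle^{-1}$, that determinant would force $\kappa$ (not $\kappa^{-1}$) on $\fP$, contradicting the statement you are proving and your own parity check ($\kappa(c)^{-1}=-1$ on the minus part). With the conventions of this paper ($H^1$ of the curve rather than the Tate module, dual Hecke operators $T^*$, usual diamond operators), the determinant is $\kappa^{-1}\langle\cdot\rangle^{-1}$. The rest of the sketch stands, but as written this step derives the wrong exponent; since the entire content of the proposition is the exponents, that is the one place the write-up needs to be corrected rather than merely tightened.
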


Sharifi has defined a homomorphism
$$
\U: X^-_\chi \to \Hom_\h(\fQ,\fP)
$$
where, for $\sigma \in X$, $\U(\sigma)$ is induced by the map  $x \mapsto (\sigma-1)x$, for $x \in H/{IH}$: since the action of $X$ on $\fP$ is trivial, this factors through $\fQ$; since the action of $X$ on $\fQ$ is trivial, we have $(\sigma -1)x \in \fP$ for all $x \in H/{IH}$. This map is clearly Galois equivariant, and so factors through $X^-_\chi$.

We can similarly define a homomorphism $$\Th: \X^-_\chii \to \Hom_\h(\fR,\fQ)$$ where, for $\tau \in \X$, $\Th(\tau)$ is induced by the map $r \mapsto ((\tau-1)r \text{ mod } I) \text{ mod } \fP$, for $r \in \tH_{DM}$: since $\X$ acts trivially on $\fQ$, we see that this factors through $\fR$; since $\X$ acts trivially on $\fR$, we see that $(\tau-1)r \in H$ for any $r \in \tH_{DM}$. This map is clearly Galois equivariant, and so factors through $\X^-_\chii$.

We have a natural map $\Hom_\h (\fR,\fQ) \otimes \Hom_\h (\fQ, \fP) \to \Hom_\h (\fR,\fP)$, given by composition. There is then a natural map $\Hom_\h (\fR, \fP) \to \fP(1)$ given by evaluation at $\zinf$, the canonical basis of $\fR$.

We obtain the clockwise map $\Phi : \X_\chii^- \otimes X_\chi^- \to \fP(1)$ in the commutative diagram (*): it is given by $\Phi(\tau \otimes \sigma)=\U(\sigma) \circ \Th(\tau) ({\zinf})$. 

\subsection{}\label{surjectivity of phi} The next result determines the image of $\Phi$. 

\begin{prop}\label{image}
The image of $\Phi$ is $I \zinf$, where $I$ is the Eisenstein ideal.
\end{prop}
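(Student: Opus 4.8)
The plan is to make $\Phi$ completely explicit and then to recognise the answer through the reducible Galois representation carried by $H/{IH}$. Unwinding the definitions of $\Th$ and $\U$ together with the two evaluation maps, and fixing a lift $\zinf' \in \HDM$ of $\zinf$, one obtains
\[
\Phi(\tau \otimes \sigma) \;=\; (\sigma-1)(\tau-1)\zinf' \bmod IH \;\in\; \fP,
\]
where $(\tau-1)\zinf' \in H$ because $\X$ acts trivially on $\fR$, and then $(\sigma-1)(\,\cdot\,) \in \fP$ because $X$ acts trivially on $\fQ$. (Since $\Gal(M/\Q_\infty)$ is abelian the two operators commute, which I will use freely.) I would first record that evaluation at $\zinf$ gives $\Hom_\h(\fR,\fP) \cong \fP$ — legitimate because $\fR \cong \h/I$ is generated by $\zinf$ and $\fP$ is an $\h/I$-module — and fix the meaning of $I\zinf \subseteq \fP$ as the image of $a \mapsto a\zinf'$, which is well defined modulo $IH$ because $I\zinf' \subseteq H$ (as $\zinf$ generates $\fR \cong \h/I$) and is independent of the chosen lift.

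The containment $\mathrm{image}(\Phi) \subseteq I\zinf$ I would read off from Ohta's description of the $G_\Q$-action. By Proposition \ref{gal}, on $H/{IH}$ the group $G_\Q$ acts through a reducible representation with $\fP$ (twist $\kappa^{-1}$) a submodule and $\fQ$ (twist $\langle\,\cdot\,\rangle^{-1}$) the quotient; writing this in upper-triangular form, $(\tau-1)$ and $(\sigma-1)$ extract the off-diagonal entries $c(\tau),\, b(\sigma)$, so that $(\sigma-1)(\tau-1)\zinf'$ becomes a product of off-diagonal entries applied to the generator. Since products of off-diagonal entries of a reducible representation lie in its reducibility ideal, and in this setting the reducibility ideal is exactly the Eisenstein ideal $I$, each such element lies in $I\zinf$.

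The reverse containment $I\zinf \subseteq \mathrm{image}(\Phi)$ is, I expect, the main obstacle: it amounts to the assertion that the off-diagonal classes indexed by the specific groups $X^-_\chi$ (everywhere unramified, feeding $\U$) and $\X^-_\chii$ (unramified outside $Np$, feeding $\Th$) already generate the full reducibility ideal $I$, with no generators missed. Here I would invoke the structural results of Ohta, Sharifi and Fukaya--Kato recalled in Section \ref{bc} — in particular the identifications $H^- \cong \Hom_\Lam(\h,\Lam)$ and $\h/I \cong \Lam/\xi$ — to show that the classes $b(\sigma)$ generate $\fP$ over $\h/I$ from the $\U$-side, while the classes $c(\tau)$ capture all of $I$ from the $\Th$-side, so that their products exhaust $I\zinf$. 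The ramification bookkeeping (why everywhere-unramified classes suffice on the $\fP$-side, while classes unramified outside $Np$ are needed on the $\fQ$-side) and the verification, via Galois equivariance, that the pairing factors through $\X^-_\chii \otimes X^-_\chi$ are the delicate points; the remaining reductions modulo $I$ are routine and I would treat them briefly.
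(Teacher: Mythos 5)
Your overall strategy is the paper's: decompose the Galois representation on $\HDM = \HDM^- \oplus H^+$ into a $2\times 2$ matrix over $\h_\cL$, observe that $\Th$ and $\U$ are induced by the two off-diagonal entries, and conclude that the image of $\Phi$ is the module of off-diagonal products applied to $\zinf$. The forward containment is fine in spirit. But the step you defer --- the reverse containment $I\zinf \subseteq \operatorname{image}(\Phi)$ --- is precisely where all the content lies, and the tools you propose for it do not suffice. What is needed is the equality $BC = I$, where $B$ (resp.\ $C$) is the $\h$-submodule of $\h_\cL$ generated by the entries $b(\tau)$ (resp.\ $c(\tau)$) for $\tau \in \Gal(\overline{\Q}/\Q_\infty)$. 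This is a theorem of Ohta, which the paper simply cites via \cite[Proposition 4.2]{sharifi}; its hard half, $I \subseteq BC$, is essentially the Ribet/Mazur--Wiles/Ohta construction of enough cocycles, i.e.\ the Eisenstein-ideal side of the main conjecture argument. It does not follow from the identifications $H^- \cong \Hom_\Lam(\h,\Lam)$ and $\h/I \cong \Lam/\xi$ that you invoke: those control the module structure of $H^-$ and the size of the congruence module, but say nothing about which elements of $\h_\cL$ actually occur as off-diagonal entries of Frobenius elements. Note also that your phrase ``the reducibility ideal is exactly the Eisenstein ideal'' already asserts both inclusions, so even your forward containment implicitly leans on the same uncited theorem (though $BC \subseteq I$ does admit a direct argument from the congruence of traces modulo $I$).

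A smaller point: the ``ramification bookkeeping'' you flag is also needed to see that passing to the quotients $\X = \Gal(M/\Q_\infty)$ and $X = \Gal(L/\Q_\infty)$ loses nothing, i.e.\ that the maps $b$ and $c$ on all of $\Gal(\overline{\Q}/\Q_\infty)$ factor through these quotients, so that $B$ and $C$ are recovered from the images of $\Th$ and $\U$. The paper passes over this silently; you correctly identify it as delicate but likewise do not carry it out. Once $BC=I$ is in hand, the remainder of your argument matches the paper's one-line conclusion that the image of $\Phi$ is $BC\zinf = I\zinf$.
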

\begin{proof}
As in \cite[section 4.2]{comp2}, we consider the matrix decomposition of the Galois representation $\rho: G_\Q \to \Aut(\HDM)$ according to the direct sum $\HDM = \HDM^- \oplus H^+$:
$$\rho(\tau)=\left(
\begin{array}{ccc}
a(\tau) & b(\tau) \\
c(\tau) & d(\tau)
\end{array}
\right),
$$ 
where, for example, $c(\tau) \in \Hom(\HDM^-,H^+)$. 
Note that since $(\HDM^-)_\cL$ and $H^+_\cL$ are free of rank $1$ over $\h_\cL$, we fix bases so that $\rho(\tau)$ can be considered as a matrix in $GL_2(\h_\cL)$. Let $B$ be the $\h$-submodule of $\h_\cL$ generated by $\{b(\tau) \ | \ \tau \in \Gal(\overline{\Q}/\Q_\infty)\}$, and similarly $C$ by the $c(\tau)$. We then have $BC=I$ by the work of Ohta, as explained in \cite[Proposition 4.2]{sharifi} (recall section \ref{bc}).

For $\tau \in \X$, consider the map $\rho(\tau)-1: \HDM \to \HDM$. It is clear that, in matrix notation, the lower-left entry of $\rho(\tau)-1$ induces the map $\Theta(\tau)$. However, as subtracting the identity does not effect the off-diagonal entries, this lower-left entry is $c(\tau)$. Similarly, for $\sigma \in X$, $\U(\sigma)$ is induced by $b(\sigma)$. It is then clear that the image of $\Phi$ is $BC \zinf = I \zinf$.
\end{proof}

This leads to the main result of the section.

\begin{prop}\label{surj}
The map $\Phi$ is surjective if and only if $\fH$ is Gorenstein 
\end{prop}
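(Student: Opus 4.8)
The plan is to reduce the statement, via Proposition \ref{image}, to a Nakayama-type cyclicity argument for the dualizing module $\tH^- \cong \Hom_\Lam(\fH,\Lam)$, exploiting the recollection from \ref{bc} that $\fH$ is Gorenstein if and only if $\tH^-$ is free of rank one over $\fH$. The whole proof will be the equivalence chain
$$\Phi \text{ surjective} \iff \HDM^- = \h\zinf \iff \tH^- \text{ cyclic over } \fH \iff \tH^- \text{ free of rank one} \iff \fH \text{ Gorenstein},$$
so the task is to justify each link.

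First I would unwind the leftmost equivalence. By Proposition \ref{image} the image of $\Phi$ is the image of $I\zinf$ in $\fP = H^-/IH^-$; note $I\zinf \subseteq H^-$, since $\zinf$ generates $\fR = \HDM^-/H^- \cong \h/I$, which is killed by $I$. Thus $\Phi$ is surjective precisely when $H^- = I\zinf + IH^-$. Feeding this into the defining exact sequence $0 \to H^- \to \HDM^- \to \fR \to 0$, together with $\HDM^- = H^- + \h\zinf$ (as $\fR \cong \h/I$ is generated by $\zinf$), gives $\HDM^- = \h\zinf + IH^- \subseteq \h\zinf + I\HDM^-$. Since $\HDM^-$ is finitely generated over the local ring $\h$ and $I \subseteq \mathfrak{m}$, Nakayama's lemma forces $\HDM^- = \h\zinf$. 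Conversely, if $\HDM^- = \h\zinf$ then, $\HDM^- = \h \otimes_\fH \tH^-$ being a faithful $\h$-module, $\zinf$ is a free basis; the quotient map $\h \cong \HDM^- \to \fR = \h/I$ then has kernel $H^- = I\zinf$, whence $\fP = I\zinf/I^2\zinf$ is exactly the image of $I\zinf$ and $\Phi$ is surjective.

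It then remains to descend cyclicity from $\HDM^-$ to $\tH^-$ and identify it with the Gorenstein condition. Writing $\HDM^- = \h \otimes_\fH \tH^-$ and using that $\h = \fH/J$ shares the residue field of $\fH$, reduction modulo the maximal ideal gives $\HDM^-/\mathfrak{m}\HDM^- \cong \tH^-/\mathfrak{m}\tH^-$, under which $\zinf$ is the image of the generator $\zinf' \in \tH^-$ of $\tH^-/H^- \cong \Lam$. Hence $\zinf$ generates $\HDM^-$ over $\h$ if and only if $\zinf'$ generates $\tH^-$ over $\fH$, that is, if and only if $\tH^-$ is cyclic. Finally, since $\fH$ is finite and flat over $\Lam$ and $\tH^- = \Hom_\Lam(\fH,\Lam)$ has the same $\Lam$-rank as $\fH$, a cyclic generator yields a surjection $\fH \twoheadrightarrow \tH^-$ whose kernel is $\Lam$-torsion, hence zero; so $\tH^-$ cyclic is equivalent to $\tH^-$ free of rank one, which by \ref{bc} is equivalent to $\fH$ Gorenstein.

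I expect the main obstacle to be verifying the two Nakayama reductions cleanly rather than the formal chain itself: one must confirm $I \subseteq \mathfrak{m}$ and that the generator $\zinf$ of $\fR$ is genuinely the reduction of $\zinf' \in \tH^-$, and for the faithfulness and rank-count steps one needs $\fH$ finite flat over $\Lam$ and reduced, so that $\tH^-$ is $\fH$-faithful of generic rank one and cyclicity upgrades to freeness.
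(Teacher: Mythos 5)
Your proof is correct and follows essentially the same route as the paper's: both reduce, via Proposition \ref{image}, to the statement that $\fP$ is generated by the image of $I\zinf$, and then use Nakayama's lemma together with the exact sequence $0 \to H^- \to \tH^- \to \Lam \to 0$ to translate this into $\tH^- \cong \Hom_\Lam(\fH,\Lam)$ being $\fH$-cyclic, hence free of rank one, i.e.\ into $\fH$ being Gorenstein. The only cosmetic difference is that you route the argument through the intermediate statement $\HDM^- = \h\zinf$ while the paper works with $H^- = I\zinf$; the standing inputs you flag (locality of $\h$, $I \subseteq \mathfrak{m}$, and $\fH$ finite and torsion-free over $\Lam$) are exactly those the paper uses implicitly.
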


The proof follows from the next two lemmas.

\begin{lem}
If $\fH$ is Gorenstein, then $\HDM^-$ is a free $\h$-module of rank $1$ with basis $\zinf$, and $H^-=I\zinf$. In particular, $\fP$ is generated by the image of $I\zinf$.
\end{lem}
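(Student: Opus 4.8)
The plan is to derive the whole statement from the single input, recorded in the outline and in section \ref{bc}, that $\fH$ is Gorenstein exactly when $\tH^- \cong \Hom_\Lam(\fH,\Lam)$ is free of rank one over $\fH$. Granting this, I would first base-change along the natural surjection $\fH \to \h$. Since $\HDM = \h \otimes_\fH \tH$ and complex conjugation commutes with the Hecke action (here $p \ge 5$, so the $\pm$-decomposition is clean), we have $\HDM^- = \h \otimes_\fH \tH^-$; as $\tH^-$ is free of rank one over $\fH$, it follows that $\HDM^- \cong \h \otimes_\fH \fH = \h$ is free of rank one over $\h$. All that remains is to pin down a basis and to read off $H^-$ and $\fP$.

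Next I would record the exact sequence linking $H^-$, $\HDM^-$ and $\fR$. Because $\tH^+ = H^+$ (section \ref{bc}) and $H^+$ is already an $\h$-module, the same base-change gives $\HDM^+ = \h \otimes_\fH H^+ = H^+$; hence the quotient $\fR = \HDM/H$ lies entirely in the minus part, and $0 \to H^- \to \HDM^- \to \fR \to 0$ is exact. Recall from section \ref{R and 0-infinity} that $\fR \cong \h/I$ and that $\zinf$ generates $\fR$.

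The crux, and the step I expect to be the main obstacle, is to upgrade $\zinf$ from a generator of the quotient $\fR$ to an honest basis of $\HDM^-$. For this I would lift $\zinf$ to $\HDM^-$ and apply Nakayama's lemma over the local ring $\h$, with maximal ideal $\mathfrak{m} \supseteq I$. Reducing the surjection $\HDM^- \to \fR$ modulo $\mathfrak{m}$ and using that $\HDM^-$ is free of rank one, both sides become one-dimensional over the residue field $\h/\mathfrak{m}$ (here it is essential that $I \subseteq \mathfrak{m}$, so that $\fR/\mathfrak{m}\fR = (\h/I)/(\mathfrak{m}/I) = \h/\mathfrak{m}$ rather than something that kills $\zinf$), and the induced map is an isomorphism. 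Since the image of $\zinf$ generates $\fR$, it is nonzero in $\fR/\mathfrak{m}\fR$, so its lift avoids $\mathfrak{m}\HDM^-$ and therefore generates $\HDM^-$ by Nakayama; being free of rank one, $\HDM^-$ then has this lift, which I identify with $\zinf$, as a basis.

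Finally, with $\HDM^- = \h\zinf$ free and the surjection $\HDM^- \to \fR \cong \h/I$ sending $\zinf$ to a generator, its kernel is exactly $I\zinf$, so $H^- = I\zinf$. The last assertion is then immediate, since $\fP = H^-/IH^-$ is the image of $H^- = I\zinf$ and is thus generated by the image of $I\zinf$. The genuine content is concentrated in the Nakayama step: the freeness of $\HDM^-$ is formal, but matching the abstract generator with the geometrically defined class $\zinf$ requires the compatibility of $\HDM^- \to \fR$ with the module structures together with the containment $I \subseteq \mathfrak{m}$.
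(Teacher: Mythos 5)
Your proposal is correct and follows essentially the same route as the paper: identify Gorenstein-ness with $\tH^-$ (hence $\HDM^-$) being free of rank one, use Nakayama over the local Hecke algebra together with the fact that $\zinf$ generates the quotient $\fR\cong\h/I$ to promote $\zinf$ to a basis, and then read off $H^-=I\zinf$ as the kernel of $\HDM^-\to\fR$. The only cosmetic difference is that the paper runs the generation argument for $\tH^-$ over $\fH$ via $\tH^-/H^-\cong\fH/\I$ before passing to $\HDM^-$, whereas you work directly with $\HDM^-$ over $\h$; the content is identical.
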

\begin{proof}
The assumption $\fH$ is Gorenstein means that we have $\Hom_\Lam(\fH,\Lam) \cong \fH$ as $\fH$-modules. Recall from section \ref{bc} that $\tH^- \cong \Hom_\Lam(\fH,\Lam)$. We see that $\tH_{DM}^-$ is a free $\h$-module of rank 1. 

Now, since $\tH^-/H^-$ is generated by $\zinf$, as an $\fH$-module and $\tH^-$ is free of rank one, we see that $\tH^-$ is also generated by $\zinf$. 

We have the commutative diagram: 

\[\xymatrix{
0 \ar[r] & H^-   \ar[r] & \HDM^- \ar[r] & \fR \ar[r] & 0\\
0 \ar[r] & I  \ar[r] \ar[u] & \h \ar[r] \ar[u]  & \h/I \ar[r]  \ar[u] & 0\\} \]
where the centre and rightmost vertical arrows are the isomorphisms given by $1 \mapsto \zinf$. We see that $H^-=I \zinf$.
\end{proof}

\begin{lem}
If $\fP$ is generated by $I \zinf$, then $\fH$ is Gorenstein.
\end{lem}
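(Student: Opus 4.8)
The plan is to prove that $\tH^-$ is free of rank one over $\fH$; by section \ref{bc}, where $\tH^-\cong\Hom_\Lam(\fH,\Lam)$, this is exactly the Gorenstein condition. First I would unwind the hypothesis. By Proposition \ref{gal}, complex conjugation acts on $\fR$ through $\kappa^{-1}$, hence by $-1$, so $\fR$ lies in the minus part and $0\to H^-\to\HDM^-\to\fR\to 0$ is exact; choose a lift $z\in\HDM^-$ of the canonical generator $\zinf$ of $\fR\cong\h/I$. Since $\fR$ is cyclic over $\h$ we have $\HDM^-=\h z+H^-$, and the assumption that $\fP=H^-/IH^-$ is generated by $I\zinf$ translates into the equality $H^-=Iz+IH^-$.

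Next I would reduce modulo the Eisenstein ideal. From $\HDM^-=\h z+H^-$ one gets $I\HDM^-=Iz+IH^-$, which equals $H^-$ by the reformulated hypothesis. Hence
\[
\HDM^-/I\HDM^-=\HDM^-/H^-=\fR\cong\h/I,
\]
a cyclic $\h$-module. Writing $k$ for the common residue field of the local ring $\fH$ and its quotient $\h$, it follows that $\HDM^-\otimes_\h k$ is one-dimensional over $k$.

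Finally I would transport this back to $\fH$. Because $\HDM=\h\otimes_\fH\tH$ and complex conjugation commutes with the Hecke action, $\HDM^-=\h\otimes_\fH\tH^-$, so that
\[
\tH^-\otimes_\fH k=(\h\otimes_\fH\tH^-)\otimes_\h k=\HDM^-\otimes_\h k
\]
is one-dimensional. By Nakayama's lemma $\tH^-$ is a cyclic $\fH$-module. Since $\fH$ is finite and free over $\Lam$, the evaluation pairing exhibits $\tH^-\cong\Hom_\Lam(\fH,\Lam)$ as a faithful $\fH$-module, and a faithful cyclic module over a commutative ring is free of rank one. Thus $\tH^-\cong\fH$ and $\fH$ is Gorenstein.

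I expect the main obstacle to be the transition from $\h$ back to $\fH$: one must check that the base-change identity $\HDM^-=\h\otimes_\fH\tH^-$ is compatible with the minus-part decomposition, so that reduction modulo $\mathfrak{m}$ genuinely computes $\tH^-\otimes_\fH k$, and one must justify that cyclicity of the dualizing module forces freeness. This last point is where the structural input enters, through the faithfulness of $\Hom_\Lam(\fH,\Lam)$, which rests on $\fH$ being finite free over $\Lam$; once that is in hand, faithful plus cyclic immediately yields an isomorphism $\fH\cong\tH^-$.
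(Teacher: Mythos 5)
Your proof is correct and follows essentially the same route as the paper: both use the hypothesis together with the extension $0\to H^-\to \HDM^-\to\fR\to 0$ and Nakayama's lemma to show that $\tH^-\cong\Hom_\Lam(\fH,\Lam)$ is a cyclic $\fH$-module, whence Gorenstein. The only differences are cosmetic — the paper first applies Nakayama to get $H^-=I\zinf$ and then runs a diagram chase on the two short exact sequences, while you reduce modulo $I$ directly and apply Nakayama to $\tH^-$ itself; you also make explicit the final step (faithful cyclic implies free of rank one) that the paper leaves implicit.
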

\begin{proof}
By Nakayama's lemma, $H^-$ is generated by $I \zinf$. Then, in the diagram
\[\xymatrix{
0 \ar[r] & H^-   \ar[r] & \tH^- \ar[r] & \tH^-/H^- \ar[r] & 0\\
0 \ar[r] & \I  \ar[r] \ar[u] & \fH \ar[r] \ar[u]  & \fH/\I \ar[r]  \ar[u] & 0\\}, \]
where the centre and rightmost arrows are given by $1 \mapsto \zinf$, the leftmost and rightmost arrows are surjective. We see that the $\tH^- \cong \Hom_\Lam(\fH, \Lam)$ is generated by one element, and so $\fH$ is Gorenstein.
\end{proof}

\section{Relation to cyclotomic units}

In this section, we recall some definitions from the theory of cyclotomic fields. We apply these ideas to construct the map $\nu'$ in the diagram (*). We then show how a result of Fukaya and Kato implies that (*) is commutative.

\subsection{}\label{U and E} Let $\mathcal{U}=\varprojlim (\Z[\zeta_{Np^r}]\otimes \Z_p)^\times$ and $U$ the pro-$p$ part of $\mathcal{U}$. Let $\mathcal{E}$ be the closure of the global units in $\mathcal{U}$ and $E$ its pro-$p$ part. They are each a product of $g=\varphi(N)/d$ groups, where $d$ is the order of $p$ in $(\Z/N\Z)^\times$.

Recall that the Artin map induces an isomorphism $U/E \cong \Gal(M'/L)$, where $M'$ is the maximal abelian $p$-extention of $\Q_\infty$ unramified outside $p$, and $L$ is as in \ref{notation}. As a result, for any primitive character $\phi$ of $(\Z/Np\Z)^\times$, we get an exact sequence:
\begin{equation}\label{euxx}
0 \to E_\phi \to U_\phi \to \X_\phi \to X_\phi \to 0.
\end{equation}

\subsection{}\label{kummer pairing} The Hilbert symbol
$$\xymatrix{
 U \times U \ar[r]^-{( \ , \ )'_r} & \mu_{p^r}^g
}$$
is given on each component of $U$ by
$$
(a,b)'_r=\frac{\sigma_{b_r}(a_r^{1/p^r})}{a_r^{1/p^r}}
$$
where $\sigma_{b_r}$ is the image of $b_r$ under the Artin map. We define $( \ , \ )_r$  to be the composite of $( \ , \ )'_r$ with the product map $\mu_{p^r}^g \to \mu_{p^r}$.

We also have
$$\xymatrix{
E \times \X \ar[r]^-{[\ , \ ]_r} & \mu_{p^r}
}$$
given by
$$
[u,\sigma]_r=\frac{\sigma(u_r^{1/p^r})}{u_r^{1/p^r}}.
$$
with $u_r \in \Z_p[\zeta_{Np^r}]^\times$.

Choosing a compatible system of $p^r$-th roots of unity, we can and do identify $\mu_{p^r}$ with $\Z/{p^r\Z}$.

We have Kummer pairing maps
$$( \ , \ )_{Kum}: U \times U \to \Z_p[[\Z_{p,N}^\times]],  \ \ 
[ \ , \ ]_{Kum}: E \times \X \to \Z_p[[\Z_{p,N}^\times]]
$$
given by
$$(a,b)_{Kum}= \varprojlim \sum_{c \in (\Z/{Np^n \Z})^\times} (\sigma_c(a),b)_n \langle c^{-1} \rangle,$$ 
 $$[u,\sigma]_{Kum}= \varprojlim \sum_{c \in (\Z/{Np^n \Z})^\times} [\sigma_c(u),\sigma]_n \langle c^{-1} \rangle,$$ 
where $\sigma_c$ is defined by
$$
\sigma_c(\zeta_{Np^n})=\zeta_{Np^n}^c.
$$

\begin{lem}\label{properties of kummer pairing}
For $a, b \in U$, $u \in E$, $\sigma \in \X$, and $c \in \Z_{p,N}^\times$ we have
\abcs 
\item $\langle c \rangle(a,b)_{Kum}=(\sigma_c a, b)_{Kum}$ and $\langle c \rangle [u,\sigma]_{Kum}=[\sigma_c u,\sigma]_{Kum}$.
\item $(b,a)_{Kum}=-\iota \tau (a,b)_{Kum}$ and $[u,\sigma_c \sigma]_{Kum}=\iota \tau \langle c \rangle \cdot [u,\sigma]_{Kum}$.
\item $(u,a)_{Kum}=[u,\sigma_a]_{Kum}$, where $\sigma_a$ is the image of $a$ under the Artin map.
\endabcs
\end{lem}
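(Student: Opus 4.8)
The plan is to reduce every assertion to standard properties of the finite-level symbols $(\ ,\ )_n$ and $[\ ,\ ]_n$ of \ref{kummer pairing} --- their bilinearity, their Galois equivariance, the symmetry of the Hilbert symbol, the non-degeneracy of the Hilbert pairing, and the compatibility of the Artin maps across levels --- together with a change of the summation variable $c$ in the defining sums. I would begin with (3), which is essentially definitional. By construction $(\sigma_c u, a)_n$ is computed by applying the Artin image of $a$ to a $p^n$-th root of $\sigma_c u$, and this is exactly the recipe defining $[\sigma_c u, \sigma_a]_n$ once $\sigma_a$ is the Artin image of $a$. Using the compatibility of the Artin maps at the various levels, the two sums defining $(u,a)_{Kum}$ and $[u,\sigma_a]_{Kum}$ agree term by term, giving $(u,a)_{Kum}=[u,\sigma_a]_{Kum}$.

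For (1) I would use that the $\Lam$-module (that is, Galois) structure on $U$ and $E$ is $c\cdot a=\sigma_c a$. Then $(ca,b)_{Kum}=\varprojlim\sum_d(\sigma_d\sigma_c a,b)_n\langle d^{-1}\rangle=\varprojlim\sum_d(\sigma_{dc}a,b)_n\langle d^{-1}\rangle$, and the substitution $e=dc$ replaces $\langle d^{-1}\rangle$ by $\langle c\rangle\langle e^{-1}\rangle$, which pulls the factor $\langle c\rangle$ out of the sum and recovers $\langle c\rangle(a,b)_{Kum}$. The computation for $[\ ,\ ]_{Kum}$ is identical.

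Part (2) is where the real work lies, and I expect it to be the main obstacle. The essential input is Galois equivariance: since $\sigma_c$ acts on $\mu_{p^n}\cong\Z/p^n\Z$ by the cyclotomic scalar $\kappa(c)$ (the $\Z_p^\times$-component of $c$), one has $(\sigma_c a,\sigma_c b)_n=\kappa(c)(a,b)_n$, and likewise, using that $c$ acts on $\X$ by conjugation, $[\sigma_c u, c\sigma]_n=\kappa(c)[u,\sigma]_n$; the latter requires some care with the choices of $p^n$-th roots and of a lift of $\sigma_c$. Combining equivariance with the symmetry of the Hilbert symbol and then substituting $c\mapsto c^{-1}$ in the defining sum converts $(b,a)_{Kum}$ into $\iota\tau(a,b)_{Kum}$; for the second identity the same substitution produces the operator $\kappa(c)\langle c^{-1}\rangle$ acting on $[u,\sigma]_{Kum}$, and the point is the clean relation $\iota\tau(\langle c\rangle)=\kappa(c)\langle c^{-1}\rangle$, which is exactly the prefactor $\iota\tau\langle c\rangle$ appearing in the statement. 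The delicate part throughout is the bookkeeping: keeping separate the role of $c$ as the group-ring element $\langle c\rangle$ and as the cyclotomic scalar $\kappa(c)$, and tracking how the change of variable interacts with both, so as to land on exactly $\iota\tau$ (and $\iota\tau\langle c\rangle$) without a spurious twist or sign.

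Finally, for non-degeneracy I would argue level by level. If $(a,b)_{Kum}=0$ for all $b$, then every group-ring coefficient vanishes; taking the coefficient of $\langle 1\rangle$ gives $(a,b)_n=0$ for all $b$ and all $n$, and the non-degeneracy of the finite-level Hilbert pairing forces $a=0$, with the symmetric argument in the second variable. The same extraction of coefficients reduces non-degeneracy of $[\ ,\ ]_{Kum}$ to that of $[\ ,\ ]_n$.
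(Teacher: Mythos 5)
The paper offers no proof of this lemma (it is dismissed as ``easily verified''), and your reduction to the finite-level bilinearity, Galois equivariance, and non-degeneracy of $(\ ,\ )_n$ and $[\ ,\ ]_n$, combined with reindexing the sums over $c$, is exactly the intended verification; parts (1), (3) and the non-degeneracy are handled correctly. The one point you flag as delicate in part (2) is genuinely there: the Hilbert symbol is skew-symmetric, $(a,b)_n(b,a)_n=1$, so the literal computation yields $(b,a)_{Kum}=-\iota\tau(a,b)_{Kum}$, off by a sign from the statement; this discrepancy is a matter of convention and is harmless for every subsequent use of the lemma (only part (3) and the induced $\Lam$-semilinearity are ever invoked), but you should either locate the convention that removes it or record that it does not affect the sequel.
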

\begin{proof}
Follows from an easy computation, using well-known properties of the pairings $( \ , \ )_r$ and $[ \ , \ ]_r$.
\end{proof}

In other words, the pairings induce pairings of $\Z_p[[\Z^\times_{p,N}]]$-modules
$$
U \times U^\#(1) \to \Z_p[[\Z^\times_{p,N}]]
$$
and
$$
E \times \X^\#(1) \to \Z_p[[\Z^\times_{p,N}]].
$$

In particular, there is an induced map
$$
E^+_\theta \to \Hom(\X^-_\chii,\Lambda^\#(1)).
$$

\begin{lem}\label{non-degeneracy}

The induced pairing
$$
U^+_\theta \times (U^-_\chii)^\#(1) \to \Lambda
$$
is non-degenerate.
\end{lem}
\begin{proof}
In Theorem \ref{explicit res}, it is proven that this pairing is isomorphic to the product pairing on the integral domain $\Lambda$. (Note that the proof of Theorem \ref{explicit res} does not use this Lemma \ref{non-degeneracy}, and so no circular reasoning arises.)
\end{proof}

\subsection{} \label{commutivity}
Let $\cyc \in E^+_\theta$ be the image of $(1-\zeta_{Np^r})_r \in \varprojlim (\Z[\zeta_{Np^r}]\otimes \Z_p)$. We have a map
$$
\nu : \X^-_\chii \to \Lam^\#(1)
$$
given by $\nu(\sigma)=[\cyc,\sigma]_{Kum}$, the pairing with cyclotomic units. The map $\nu': \X^-_\chii \to (\Lam/\xi)^\#(1)$ in the diagram (*) is the composition of $\nu$ with the natural map $\Lam^\#(1) \to (\Lam/\xi)^\#(1)$. 

\begin{prop}\label{comm}
As maps
\[\xymatrix{
\X^-_\chii  \ar[r]^-{\nu'} & (\Lam/\xi)^\#(1) }\]
we have $\nu'=\Theta$.
\end{prop}

This proposition is exactly saying that the diagram (*) is commutative. This is a restatement of the following theorem of Fukaya and Kato (\cite[Theorem 9.6.3]{kato-fukaya}). 

\begin{thm}\label{ext}
Let $\mathcal{E}$ denote the natural extension of $\fR$ by $\fQ$, given by their structure as subquotients of $\HDM$. Then the class of $\mathcal{E}$ is given by cyclotomic units.
\end{thm}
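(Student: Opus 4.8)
The plan is to identify the class of $\mathcal{E}$ with an explicit Galois cocycle and then to recognize that cocycle, through Kummer theory, as the one attached to cyclotomic units. First I would make the extension concrete. Since $IH \subseteq H \subseteq \HDM$ and $\fP = H^-/IH^-$ is a $G_\Q$-submodule of $H/IH$ (Proposition \ref{gal}), the subquotient $\mathcal{E} = (\HDM/IH)/\fP$ has $\fQ = (H/IH)/\fP$ as a submodule and $\fR = \HDM/H$ as quotient, giving the natural extension $0 \to \fQ \to \mathcal{E} \to \fR \to 0$ of $\h[G_\Q]$-modules. By Proposition \ref{gal}, $G_\Q$ acts on $\fR$ by $\kappa^{-1}$ and on $\fQ$ by $\langle\cdot\rangle^{-1}$, so the class of $\mathcal{E}$ lies in $\Ext^1_{\h[G_\Q]}(\fR,\fQ) = H^1(G_\Q, \Hom_\h(\fR, \fQ))$, a group whose coefficients carry a Tate twist by $\kappa$.

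Next I would write this class down as a cocycle. Choosing $\Lam$-bases as in the matrix decomposition of Proposition \ref{image} with respect to $\HDM_\cL = (\HDM^-)_\cL \oplus H^+_\cL$ provides a $\Lam$-linear splitting of $\mathcal{E}$, and the associated $1$-cocycle is exactly the upper-right entry $\tau \mapsto b(\tau)$, reduced modulo $I$ and $\fP$ -- that is, the homomorphism $\Th$ of section \ref{up and theta}, regarded as a class on $\X^-_\chii$. Identifying the class of $\mathcal{E}$ with cyclotomic units therefore means proving that $\Th$ agrees with the map $\sigma \mapsto [\cyc,\sigma]_{Kum}$ of section \ref{kummer pairing}, under the isomorphism $E^+_\theta \cong \Hom_\Lam(\X^-_\chii, \Lam^\#)$ furnished by the Kummer pairing.

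The crux is to evaluate this Kummer-theoretic class and to single out the cyclotomic unit. The Tate twist by $\kappa$ in the coefficients forces the class to be of Kummer type, hence classified by a norm-compatible system in $\varprojlim(\Z[\zeta_{Np^r}]\otimes\Z_p)^\times$; the substance of the theorem is that this system is precisely $(1 - \zeta_{Np^r})_r$, i.e. $\cyc$. I would prove this by tracing $\HDM$ back to the cohomology of the modular curves and computing the extension at the cusps recorded by $\zinf$: the Siegel units trivializing the relevant part of $\HDM$ degenerate, along the path from $0$ to $\infty$, to the units $1-\zeta$, and the explicit reciprocity law identifies their Kummer images with the cocycle $b(\tau)$. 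This is exactly the point where the arithmetic input of \cite{kato-fukaya} -- the compatibility of zeta elements with the Eisenstein series carrying the relevant $L$-values -- is indispensable, and I expect it to be the main obstacle, since every formal step reduces to it.

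Finally, granting this identification, Proposition \ref{comm} follows by unwinding definitions: replacing $\Th(\tau)$ by the cyclotomic-unit cocycle $\nu(\tau)$ acting through $\Lam^\#/\xi$ turns $\Phi(\tau \otimes \sigma) = \U(\sigma)(\Th(\tau)(\zinf))$ into the clockwise composite $\U(\nu'(\tau)\cdot\sigma)$, using the relation $BC = I$ from Proposition \ref{image} and the pairing compatibilities of Lemma \ref{properties of kummer pairing}. This establishes the commutativity of the diagram (*).
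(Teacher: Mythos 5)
You have correctly reconstructed the formal frame of the statement, but you should be aware that the paper does not prove this theorem at all: it is imported verbatim from Fukaya--Kato \cite[Theorem 9.6.3]{kato-fukaya}, and the surrounding text in section 3.3 only unwinds what ``is given by cyclotomic units'' means --- identifying $\Ext^1_{\Lam/(\xi)[G_\Q]}(\fR,\fQ)$ with $H^1(\Z[1/Np],\Lam^\#/\xi(1))$, observing that the class of $\mathcal{E}$ is the cocycle $\Th$, and recording that the theorem asserts $j(\nu)=\mathcal{E}$, whence $\nu'=\Th$ and Proposition \ref{comm}. Your first, second and fourth paragraphs reproduce this bookkeeping essentially correctly, modulo two small inaccuracies: the relevant cohomology is that of $\Z[1/Np]$ (classes unramified outside $Np$), not of all of $G_\Q$; and the relation $BC=I$ belongs to the proof of Proposition \ref{image}, not to the deduction of Proposition \ref{comm}.

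The genuine gap sits exactly where you say you expect the main obstacle: the identification of the class with the system $(1-\zeta_{Np^r})_r$. Your sentence about Siegel units degenerating along the path from $0$ to $\infty$ to the units $1-\zeta$ is a heuristic for why the Fukaya--Kato theorem should hold, not an argument; making it precise is the entire content of \cite{kato-fukaya} (zeta elements, their boundary at the cusps, and their compatibility with the Eisenstein ideal), and none of it is supplied. A secondary gap: the claim that ``the Tate twist by $\kappa$ forces the class to be of Kummer type'' does not by itself show that the class of $\mathcal{E}$ lies in the image of $j\colon H^1(\Z[1/Np],\Lam^\#(1))\to H^1(\Z[1/Np],\Lam^\#/\xi(1))$, i.e.\ that it lifts from coefficients mod $\xi$ to integral coefficients; that lifting is part of what must be proved before one can even ask which norm-compatible system of units represents it. Since the paper itself treats the statement as a black box, the honest assessment is that your proposal frames the theorem correctly but does not prove it.
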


To explain how Proposition \ref{comm} follows from this, we must explain what is meant by `is given by cyclotomic units'.

The modules $\fR$ and $\fQ$ are both free of rank one as $\Lam/\xi$-modules, with canonical bases. By Proposition \ref{gal}, we have canonical isomorphisms of $\Lam/{(\xi)} [G_\Q]$-modules:
$$
\fR = \Lam/\xi(-1), \ \fQ = (\Lam/\xi)^\#.
$$
Thus group of extension class of $\fR$ by $\fQ$ coincides with the group
$$
\Ext^1_{\Lam/{(\xi)} [G_\Q]}(\Lam/\xi(-1), (\Lam/\xi)^\#) = H^1(\Z[1/{Np}],  (\Lam/\xi)^\#(1)).
$$
As an element of this Galois cohomology, the class $\mathcal{E}$ is given by the map $\Th$ of section \ref{up and theta}. 

We have a natural map
$$\xymatrix{
H^1(\Z[1/Np], \Lam^\#(1)) \ar[r]^-j & H^1(\Z[1/Np], (\Lam/\xi)^\#(1)).}
$$
The map $\nu$ gives an element of $H^1(\Z[1/Np], \Lam^\#(1))$, and Theorem \ref{ext} says exactly that $j(\nu)=\nu'=\mathcal{E}$. Thus $\nu'=\Theta$, whence Proposition \ref{comm}.

\section{Iwasawa Adjunction}\label{Iwasawa Adjunction}

In this section, we recall the theory of Iwasawa adjoints in terms of Ext.

\subsection{}\label{x as h} We recall the interpretation of $\X$ and $X$ in terms of Galois cohomology. Let $G_r=\Gal(K/{\Q(\zeta_{Np^r})})$ and $G_\infty=\Gal(K/\Q_\infty)$, where $K$ is the maximal Galois extension of $\Q(\zeta_{Np})$ that is unramified outside $Np$, and $\Q_\infty$ is as in \ref{notation}. Let $H_\infty^q = \varprojlim_r H^q (G_r, \Z_p)$.
We have
$$
\X = H^1(G_\infty, \QpZp)^\vee,
$$
where $(-)^\vee$ denotes the Pontryagin dual, and, by \cite[Lemmas 2.1 and 4.11]{sharifi}, an isomorphism $$X(-1)_{\theta^{-1}} \cong (H_\infty^2)_{\theta^{-1}}.$$ (Recall our assumptions on $\theta$ (\ref{notation})).

\subsection{} For a $\Lam$-module $M$, denote by $\text{E}^i(M)$ the group $\Ext_\Lam^i(M,\Lam)$. There is a natural $\Lam$-module structure on $\text{E}^i(M)$ given by $(\lambda.f)(m)=f(\iota(\lambda) m)$ for $\lambda \in \Lam$, $f \in \text{E}^0(M)$ and $m \in M$, and by functoriality for higher Ext. The groups $\text{E}^i(M)$ are called the {\em (generalized) Iwasawa adjoints} of $M$.
 
\begin{lem}
The Kummer pairing gives an isomorphism of $\Lambda$-modules
$$
U^+_\theta \cong E^0(U^-_\chii)(1)
$$
\end{lem}
\begin{proof}
Noting our convention for the $\Lambda$-action on $\text{E}^0$, this is just a restatement of Lemma \ref{non-degeneracy}.
\end{proof}
 
Before we can compute more adjoints, we must first recall some generalities.

\begin{prop}\label{e and t}
For any finitely generated $\Lam$-module $M$, let $T_0(M)$ be the maximal finite submodule of $M$, and let $T_1(M)$ be the $\Lam$-torsion submodule of $M$. We have the following for any finitely generated $\Lam$-module $M$: 
\begin{abcs}
\item\label{lamtor} The module $\text{E}^1(M)$ is $\Lam$-torsion.
\item If $M$ is $\Lam$-torsion, then $T_0(\text{E}^1(M))=0$.
\item $\text{E}^1(M/T_0(M))=\text{E}^1(M)$, and $\text{E}^1(M)=0$ if and only if $M/T_0(M)$ is free.
\item $\text{E}^2(M) =T_0(M)^\vee$.
\item\label{EE} $\text{E}^1(\text{E}^1(M)) = T_1(M)/T_0(M)$.
\item $\text{E}^1(M)$ is finite if and only if $T_1(M)$ is finite.
\end{abcs}
\end{prop}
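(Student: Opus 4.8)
The ring $\Lam$ is isomorphic to $\fO[[T]]$ for $\fO$ the ring of integers in a finite extension of $\Q_p$ (note that $\theta$ has order prime to $p$), so it is a complete regular local ring of Krull dimension $2$; in particular it is a Gorenstein, Cohen--Macaulay, normal domain with finite residue field, it has global dimension $2$, and its pseudo-null modules are exactly the finite ones. The plan is to derive all six assertions from three standard inputs: (i) local duality over the Gorenstein ring $\Lam$, giving $\text{E}^{2-i}(M)\cong H^i_{\mathfrak m}(M)^\vee$, and in particular $\text{E}^2(M)\cong H^0_{\mathfrak m}(M)^\vee=T_0(M)^\vee$, with $\text{E}^0(M)=\text{E}^1(M)=0$ and $\text{E}^2(M)\cong M^\vee$ whenever $M$ is finite; (ii) the biduality isomorphism $R\Hom_\Lam(R\Hom_\Lam(M,\Lam),\Lam)\cong M$ (valid since $\Lam$ is Gorenstein of finite global dimension) and the resulting convergent spectral sequence $E_2^{p,q}=\text{E}^p(\text{E}^{-q}(M))\Rightarrow M$ concentrated in total degree $0$; and (iii) the facts, special to regular local rings of dimension at most $2$, that duals $\Hom_\Lam(M,\Lam)$ are reflexive and that reflexive modules are free.

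Parts (1) and (4) are immediate. For (1), flat base change along $\Lam\to\cL=\Frac(\Lam)$ gives $\text{E}^1(M)\otimes_\Lam\cL\cong\Ext^1_\cL(M\otimes_\Lam\cL,\cL)=0$ over the field $\cL$, so $\text{E}^1(M)$ is $\Lam$-torsion. Part (4) is the case $i=0$ of local duality above. These two computations, together with input (iii), pin down every entry of the $E_2$-page: the row $\text{E}^p(\text{E}^0 M)$ reduces to $\text{E}^0(\text{E}^0 M)=M^{\ast\ast}$ (a dual, hence reflexive, hence free) in degree $p=0$ and vanishes for $p>0$; the row $\text{E}^p(\text{E}^1 M)$ has $\text{E}^0=0$ (as $\text{E}^1 M$ is torsion) and $\text{E}^2(\text{E}^1 M)=T_0(\text{E}^1 M)^\vee$; and the row $\text{E}^p(\text{E}^2 M)$, with $\text{E}^2 M=T_0(M)^\vee$ finite, contributes only $\text{E}^2(\text{E}^2 M)=T_0(M)^{\vee\vee}=T_0(M)$.

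With the $E_2$-page in hand the remaining parts are read off by inspecting total degrees and differentials. For (2), if $M$ is $\Lam$-torsion then $\text{E}^0 M=\Hom_\Lam(M,\Lam)=0$, so the whole row $\text{E}^p(\text{E}^0 M)$ vanishes; the entry $\text{E}^2(\text{E}^1 M)=T_0(\text{E}^1 M)^\vee$ then lies in total degree $1$ with no nonzero differential entering or leaving it, so it must vanish in the abutment, whence $T_0(\text{E}^1 M)=0$. For (5), the three entries in total degree $0$, namely $M^{\ast\ast}$, $\text{E}^1(\text{E}^1 M)$ and $T_0(M)$, are the graded pieces of a filtration of the abutment $M$; since the top piece is the image $M/T_1(M)$ of $M$ in its reflexive hull $M^{\ast\ast}$ (a torsion-free module) and $\text{E}^1(\text{E}^1 M)$ is torsion by (1), the middle graded piece is forced to be $T_1(M)/T_0(M)$. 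For the first half of (3), the long exact sequence of $0\to T_0(M)\to M\to M/T_0(M)\to 0$ together with $\text{E}^0(T_0 M)=\text{E}^1(T_0 M)=0$ yields $\text{E}^1(M/T_0 M)\cong\text{E}^1(M)$; for the second half, writing $M'=M/T_0(M)$, if $\text{E}^1(M)=0$ then the rows $\text{E}^p(\text{E}^1 M')$ and $\text{E}^p(\text{E}^2 M')$ both vanish (the latter since $\text{E}^2 M'=T_0(M')^\vee=0$), so the spectral sequence degenerates to $M'\cong\text{E}^0(\text{E}^0 M')=(M')^{\ast\ast}$, which is free by (iii); the converse is clear.

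Finally, (6) follows from (5) once one computes $\text{E}^1$ of a torsion-free module. If $\text{E}^1(M)$ is finite then $\text{E}^1(\text{E}^1 M)=0$, so $T_1(M)/T_0(M)=0$ and $T_1(M)=T_0(M)$ is finite. Conversely, if $T_1(M)$ is finite then $T_1(M)=T_0(M)$ and $M/T_1(M)$ is torsion-free; for any torsion-free $N$ the inclusion $N\hookrightarrow N^{\ast\ast}$ into the free reflexive hull has cokernel supported only at $\mathfrak m$ (it vanishes at every height-one prime, where $\Lam$ is a discrete valuation ring), hence finite, and the long exact sequence gives $\text{E}^1(N)\cong(N^{\ast\ast}/N)^\vee$, which is finite; applying this to $N=M/T_0(M)$ and invoking the first half of (3) shows $\text{E}^1(M)$ is finite. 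I expect the main obstacle to be purely organizational: setting up the biduality spectral sequence and rigorously identifying its $E_2$-entries via local duality and the regular-ring facts. Once that bookkeeping is in place, each part drops out by inspection.
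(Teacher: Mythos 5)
Your argument is correct: all six parts do follow as you describe from local duality over the two-dimensional regular local ring $\Lam\cong\fO[[T]]$, the biduality spectral sequence $\text{E}^p(\text{E}^{-q}(M))\Rightarrow M$, and the facts that duals are reflexive and reflexive modules over such a ring are free (I checked in particular that the only potentially nonzero $d_2$, from $E_2^{0,0}=M^{**}$ into $E_2^{2,-1}=T_0(\text{E}^1M)^\vee$, is handled correctly in your proofs of (2) and (5)). The paper gives no proof of its own --- it simply cites \cite[section 5.4]{nsw} --- and your write-up is a faithful, self-contained rendering of exactly the standard argument found in that reference and in Jannsen's work on Iwasawa adjoints, so this is essentially the same approach.
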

\begin{proof}
See \cite[section 5.4]{nsw}.
\end{proof}

This is relevant to our situation because of the following proposition.

\begin{prop}\label{spec}
There is a spectral sequence of finitely generated $\Lam$-modules
$$
\text{E}_2^{ \, p,q}=E^p(H^q(G_\infty,\QpZp)^\vee) \Rightarrow H^{p+q}_\infty
$$
\end{prop}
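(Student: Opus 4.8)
The plan is to recognize this as Jannsen's spectral sequence and to derive it from a single duality isomorphism between complexes of $\Lam$-modules, followed by the formal dual-complex (hyper-$\Ext$) spectral sequence. Let $\mathcal{G}=\Gal(K/\Q)$, so that $\mathcal{G}/G_\infty=\Gal(\Q_\infty/\Q)$ and each $\mathcal{G}/G_r=\Gal(\Q(\zeta_{Np^r})/\Q)$ is finite; thus $\Lam$ is the $\theta$-part of $\varprojlim_r\Z_p[\mathcal{G}/G_r]=\Z_p[[\mathcal{G}/G_\infty]]$. Writing $R\Gamma_{\mathrm{Iw}}=R\varprojlim_r R\Gamma(G_r,\Z_p)$ for the complex computing Iwasawa cohomology, so that $H^{n}(R\Gamma_{\mathrm{Iw}})=H^n_\infty$, the goal is to identify $R\Gamma_{\mathrm{Iw}}$ with $R\Hom_\Lam(D,\Lam)$, where $D=R\Gamma(G_\infty,\QpZp)^\vee$, and then to run the universal-coefficient spectral sequence.

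First I would record the two instances of Shapiro's lemma in the limit. On the one hand, since $\Z_p[\mathcal{G}/G_r]$ is induced from the trivial representation of $G_r$, Shapiro gives $H^n(G_r,\Z_p)\cong H^n(\mathcal{G},\Z_p[\mathcal{G}/G_r])$; granting the finiteness discussed below, passing to $\varprojlim_r$ identifies $R\Gamma_{\mathrm{Iw}}$ with the continuous cohomology $R\Gamma(\mathcal{G},\Lam)$. On the other hand, dualizing, $(\Z_p[\mathcal{G}/G_r])^\vee$ is coinduced from $\QpZp$ on $G_r$, so $H^q(\mathcal{G},(\Z_p[\mathcal{G}/G_r])^\vee)\cong H^q(G_r,\QpZp)$, and passing to $\varinjlim_r$ along restriction (using $G_\infty=\bigcap_r G_r$) gives $H^q(\mathcal{G},\Lam^\vee)\cong H^q(G_\infty,\QpZp)$. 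Since $D$ is the Pontryagin dual complex, this yields $H^{-q}(D)\cong H^q(G_\infty,\QpZp)^\vee$.

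The crux is the duality $R\Gamma(\mathcal{G},\Lam)\cong R\Hom_\Lam\!\big(R\Gamma(\mathcal{G},\Lam^\vee)^\vee,\Lam\big)=R\Hom_\Lam(D,\Lam)$. This is the formal content of Jannsen's theorem: $\Lam=\varprojlim_r\Z_p[\mathcal{G}/G_r]$ and $\Lam^\vee=\varinjlim_r(\Z_p[\mathcal{G}/G_r])^\vee$ are $\Lam$-Pontryagin dual, and because each $H^q(\mathcal{G},\Z_p[\mathcal{G}/G_r])$ is a finitely generated $\Z_p$-module, the corresponding inverse and direct systems are linked by $\Lam$-linear duality in the derived category. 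The needed finiteness — that $G_\infty$ has finite $p$-cohomological dimension and finitely generated cohomology, which holds because $K/\Q_\infty$ is unramified outside $Np$ — also makes $D$ a perfect complex and, via Mittag--Leffler, forces the relevant $\varprojlim^1$ to vanish, so that the naive limit $H^n_\infty$ really computes $H^n(R\Gamma_{\mathrm{Iw}})$. All of this is precisely the setting of \cite[\S5.4]{nsw}.

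Finally I would feed $D$ into the standard hyper-$\Ext$ spectral sequence for a bounded complex of finitely generated $\Lam$-modules,
\[
E_2^{\,p,q}=\Ext^p_\Lam\!\big(H^{-q}(D),\Lam\big)\Longrightarrow H^{p+q}\big(R\Hom_\Lam(D,\Lam)\big).
\]
By the previous step the contravariance of Pontryagin duality turns $H^{-q}(D)$ into $H^q(G_\infty,\QpZp)^\vee$, so the $E_2$-page is $\text{E}^p(H^q(G_\infty,\QpZp)^\vee)$, while $R\Hom_\Lam(D,\Lam)\cong R\Gamma_{\mathrm{Iw}}$ has $H^{p+q}=H^{p+q}_\infty$; these are finitely generated $\Lam$-modules by Proposition~\ref{e and t}. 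Note that the involution $\iota$ arising from Pontryagin duality is exactly the twist already built into the $\Lam$-structure on $\text{E}^p$ in \S3.2, which is why the abutment appears untwisted. Passing to the $\theta$-eigenspace throughout is harmless, since the relevant prime-to-$p$ torsion group has order invertible in $\Z_p$ (as $p\nmid\varphi(N)$), so the $\theta$-projector is an idempotent and the eigenspace functor is exact, commuting with cohomology, inverse limits, and $\Ext$. The main obstacle is the duality step: establishing $R\Gamma(\mathcal{G},\Lam)\cong R\Hom_\Lam(D,\Lam)$ at the level of complexes — getting the degree indexing and the $\iota$-twist correct and verifying the perfectness and Mittag--Leffler hypotheses — rather than merely on individual cohomology groups. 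Once that isomorphism is secured the spectral sequence is purely formal.
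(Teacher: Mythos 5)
The paper's own ``proof'' is a one-line citation of \cite[Theorem 1]{jannsen}, so your proposal is not so much an alternative route as an actual reconstruction of Jannsen's argument --- and it is the correct reconstruction: the two Shapiro identifications, the finiteness and Mittag--Leffler hypotheses (satisfied here because the relevant Galois group has $p$-cohomological dimension $2$ for $p$ odd and finitely generated cohomology on open subgroups), and the hyper-$\Ext$ spectral sequence of a perfect complex are exactly the ingredients of Jannsen's proof. The one step you flag as the main obstacle, the duality $R\Gamma(\mathcal{G},\Lam)\cong R\Hom_\Lam(D,\Lam)$, becomes transparent if you compute everything from a single resolution $P_\bullet\to\Z_p$ by finitely generated projective $\Z_p[[\mathcal{G}]]$-modules: then $R\Gamma(\mathcal{G},\Lam^\vee)=\Hom_{\Z_p[[\mathcal{G}]]}(P_\bullet,\Lam^\vee)=(\Lam\otimes_{\Z_p[[\mathcal{G}]]}P_\bullet)^\vee$, so that $D=\Lam\otimes_{\Z_p[[\mathcal{G}]]}P_\bullet$ is a perfect complex of $\Lam$-modules with $H^{-q}(D)=H^q(G_\infty,\QpZp)^\vee$, and tensor--Hom adjunction gives $\Hom_\Lam(D,\Lam)=\Hom_{\Z_p[[\mathcal{G}]]}(P_\bullet,\Lam)=R\Gamma(\mathcal{G},\Lam)$; the bimodule bookkeeping in this adjunction is precisely where the $\iota$-twist you mention is absorbed, matching the twisted $\Lam$-structure the paper puts on $\text{E}^i(M)$. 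Two small corrections: the twisted $\Ext$-structure is introduced in section 4, not section 3; and the finite generation of the $E_2$-terms comes from the finiteness of $H^q(G_r,\Z/p\Z)$ via Nakayama rather than from Proposition \ref{e and t}, which presupposes it. What your approach buys is an actual proof with the hypotheses made visible; what the paper's citation buys is brevity, deferring those same finiteness statements to Jannsen's Corollary 4 and Lemma 5, which it must invoke later in Corollary \ref{Xadjoints} anyway.
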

\begin{proof}
This is a special case of \cite[Theorem 1]{jannsen}.
\end{proof}

\begin{cor} \label{Xadjoints}
We have $X^-_\chii(-1) \cong \text{E}^1(\X^+_\theta)$ and $\text{E}^1(X^-_\chii)=\X^+_\theta(-1).$ 
\end{cor}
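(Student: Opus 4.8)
The plan is to read the isomorphism $\Xc \cong \text{E}^1(\X^+_\theta)$ directly off the Jannsen spectral sequence of Proposition \ref{spec}, and then to deduce the companion identity $\text{E}^1(\Xc) = \X^+_\theta$ formally from the double-adjoint formula of Proposition \ref{e and t}(\ref{EE}).

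First I would restrict the spectral sequence $\text{E}_2^{p,q} = \text{E}^p(H^q(G_\infty,\QpZp)^\vee) \Rightarrow H^{p+q}_\infty$ to the $\theta^{-1}$-eigenspace. As $p \nmid \varphi(Np)$, this eigenspace functor is exact, so it yields a spectral sequence of $\Lam$-modules; here I must be careful that, because the action on $\text{E}^p$ is the inverse-twisted one $(\lambda.f)(m)=f(\lambda^{-1}m)$, the $\theta^{-1}$-eigenspace of $\text{E}^p(M)$ is $\text{E}^p(M_\theta)$, which is exactly where the involution $\#$ enters the bookkeeping. Two vanishing statements then trim the spectral sequence to almost nothing: since $\QpZp$ carries the trivial action, $H^0(G_\infty,\QpZp)^\vee = \Z_p$ lies in the trivial eigenspace, so the whole $q=0$ column dies in the $\theta^{-1}$-part (here I use $\theta \ne 1$, which follows from primitivity); and since $\Lam$ is regular of dimension $2$ we have $\text{E}^p = 0$ for $p \ge 3$, while $cd_p(G_\infty)=2$ forces $H^n_\infty = 0$ for $n \ge 3$. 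The only surviving piece in total degree $2$ is then a short exact sequence $0 \to \text{E}^1(\X)_{\theta^{-1}} \to (H^2_\infty)_{\theta^{-1}} \to K \to 0$, where $K = \ker\bigl(d_2\colon \text{E}^0(H^2{}^\vee)_{\theta^{-1}} \to \text{E}^2(\X)_{\theta^{-1}}\bigr)$.

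Next I would show $K = 0$. On one hand $K$ is a submodule of $\text{E}^0(H^2{}^\vee)_{\theta^{-1}} = \Hom_\Lam(-,\Lam)$, hence $\Lam$-torsion-free; on the other hand $(H^2_\infty)_{\theta^{-1}} \cong X(-1)_{\theta^{-1}}$ by \ref{x as h} and $X$ is $\Lam$-torsion, so its quotient $K$ is torsion. A module that is both torsion and torsion-free is zero, whence $(H^2_\infty)_{\theta^{-1}} \cong \text{E}^1(\X)_{\theta^{-1}} = \text{E}^1(\X^+_\theta)$. Combining this with the chain $\Xc = X^-_\chii \cong X(-1)_{\theta^{-1}} \cong (H^2_\infty)_{\theta^{-1}}$ --- the first identification being that the Tate twist carries the $\theta^{-1}$-eigenspace onto the $\omega\theta^{-1}=\chii$-eigenspace, and the second being \ref{x as h} --- produces the first isomorphism $\Xc \cong \text{E}^1(\X^+_\theta)$.

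For the second identity I would apply $\text{E}^1$ and use Proposition \ref{e and t}(\ref{EE}): $\text{E}^1(\Xc) = \text{E}^1(\text{E}^1(\X^+_\theta)) = T_1(\X^+_\theta)/T_0(\X^+_\theta)$. Invoking the two standard facts that $\X^+_\theta$ is $\Lam$-torsion (so $T_1(\X^+_\theta)=\X^+_\theta$) and has no nonzero finite submodule (so $T_0(\X^+_\theta)=0$) then gives $\text{E}^1(\Xc)=\X^+_\theta$. I expect the main obstacle to be the bookkeeping in the first part: one must verify that in the $\theta^{-1}$-eigenspace every term other than $\text{E}^1(\X)$ and the abutment genuinely vanishes, and that the $(-1)$-twist and the $\#$-involution are tracked consistently throughout; granting the classical torsion and no-finite-submodule inputs, the remainder is formal homological algebra from Proposition \ref{e and t}.
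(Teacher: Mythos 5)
Your proposal is correct and follows the paper's strategy in all essentials: read off $H^2_\infty \cong \text{E}^1(\X)$ from the Jannsen spectral sequence, pass to the $\theta^{-1}$-eigenspace and twist to get $\Xc \cong \text{E}^1(\X^+_\theta)$, then apply $\text{E}^1$ and the formula $\text{E}^1(\text{E}^1(M))=T_1(M)/T_0(M)$ together with the facts that $\X^+_\theta$ is torsion with no nonzero finite submodule. The one place you genuinely diverge is the edge term $E_2^{0,2}$: the paper kills it outright by invoking $H^2(G_\infty,\QpZp)=0$ (the abelian case of Leopoldt, i.e.\ weak Leopoldt), and cites Jannsen for the vanishing of $E_2^{2,0}$ and $E_2^{3,0}$, whereas you retain $E_2^{0,2}$ and show its contribution $K$ to the abutment vanishes because it is simultaneously $\Lam$-torsion-free (a submodule of $\Hom_\Lam(-,\Lam)$ over the domain $\Lam$) and $\Lam$-torsion (a quotient of $X(-1)_{\theta^{-1}}$, with $X$ torsion by Iwasawa's theorem), and you get the vanishing of the $q=0$ column from $\theta\ne 1$ and of $\text{E}^p$ for $p\ge 3$ from $\Lam$ having global dimension $2$. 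Your variant trades the weak Leopoldt input for the (equally standard) torsionness of the unramified Iwasawa module and is slightly more self-contained; the paper's route is shorter because the cited vanishings dispose of all extraneous terms at once. Both are valid, and the second half of your argument coincides with the paper's.
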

\begin{proof}
In the spectral sequence of Proposition \ref{spec}, we have $E_2^{3,0}=E^{2,0}_2=E_2^{0,2}=0$ ($E_2^{3,0}=0$ and $E^{2,0}_2=0$  are in \cite[Corollary 4]{jannsen} and \cite[Lemma 5]{jannsen} respectively, and $E^{0,2}_2=0$ because $H^2(G_\infty, \QpZp)=0$ by the abelian case of Leopoldt's conjecture). This gives an isomorphism $H_\infty^2 \cong \text{E}^1(H^1(G_\infty,\QpZp)^\vee)=\text{E}^1(\X)$. Thus,
$$
X^-_\chii(-1) = X^-(-1)_{\theta^{-1}} \cong (H^2_\infty)_{\theta^{-1}} \cong \text{E}^1(\X)_{\theta^{-1}} = \text{E}^1(\X^+_\theta).
$$

We now have $\text{E}^1(X^{-}_\chii(-1))=\text{E}^1(\text{E}^1(\X^+_\theta))=T_1(\X^+_\theta)/T_0(\X_\theta^+)$ using item \ref{EE} of Proposition \ref{e and t}. The result now follows from the fact that $\X^+$ is torsion and has no non-zero finite submodules.
\end{proof}

\subsection{} Let $\xi_\chii \in \Lam$ be a characteristic power series of $X^-_\chii$ (in fact, we make a special choice -- see Remark \ref{choice of xi}).  We require some lemmas.

\begin{lem}\label{dia}
There exists a natural commutive diagram with exact rows:
\[\xymatrix{
0 \ar[r] & U_\chii^-(-1) \ar[r] \ar[d] & \X_\chii^-(-1) \ar[r] \ar[d]^{\nu(-1)} & X^-_\chii(-1) \ar[r] & 0 \\
0 \ar[r] & \Lam^\# \ar[r] & \Lam^\# \ar[r] &  \Lam^\#/\tau \xi_\chii \ar[r] & 0}\]
\end{lem}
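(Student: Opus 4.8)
The plan is to read off both rows from objects already in play and to manufacture all three vertical maps out of the single map $\nu$, so that the only real content is one divisibility coming from the Main Conjecture.

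First I would produce the top row. I specialize the four-term sequence (\ref{euxx}) to $\phi=\chii$ and pass to minus parts, which changes nothing since $\chii$ is odd. The one input needed is $E^-_\chii=0$. This holds because the minus part of the inverse limit of the pro-$p$ global units is, up to the real units (which lie in the plus part), the Tate module $\Z_p(1)$ of roots of unity, and $\Z_p(1)$ has trivial $\chii$-eigenspace as soon as $\chii\neq\omega$ --- equivalently $\theta\neq\mathbf{1}$, which is forced by the primitivity of $\theta$ (its conductor is $Np>1$). With $E^-_\chii=0$ the map $U^-_\chii\to\X^-_\chii$ is injective and (\ref{euxx}) collapses to $0\to U^-_\chii\to\X^-_\chii\to X^-_\chii\to 0$. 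The bottom row is the tautological sequence $0\to\Lam^\#\xrightarrow{\cdot\xi_\chii}\Lam^\#\to\Lam^\#/\xi_\chii\to 0$, exact because $\Lam^\#$ is a domain and $\xi_\chii\neq 0$ (the module $X^-_\chii$ being $\Lam$-torsion).

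Next I take the middle vertical arrow to be $\nu$ and build the other two from it. By Lemma \ref{properties of kummer pairing}(3), the restriction of $\nu=[\cyc,-]_{Kum}$ along the inclusion $U^-_\chii\hookrightarrow\X^-_\chii$ (the Artin map) is $a\mapsto(\cyc,a)_{Kum}$; under the perfect pairing $U^+_\theta\cong\Hom_\Lam(U^-_\chii,\Lam^\#)$ this is the homomorphism attached to the cyclotomic unit $\cyc\in U^+_\theta$. The crux is to identify its image as $\xi_\chii\Lam^\#$. Since $U^-_\chii$ and $U^+_\theta$ are free of rank one over $\Lam$ and the pairing is perfect, this image is the principal ideal generated by the coordinate of $\cyc$, namely the characteristic ideal of $U^+_\theta/\Lam\cyc$. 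By Iwasawa's theorem identifying this index with a $p$-adic $L$-function, together with the Main Conjecture under the reflection $\chii=\omega\theta^{-1}$, one has $\mathrm{char}_\Lam(U^+_\theta/\Lam\cyc)=\mathrm{char}_\Lam(X^-_\chii)=(\xi_\chii)$. Hence $\nu(U^-_\chii)=\xi_\chii\Lam^\#$.

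Granting this, the diagram assembles formally. Because $\nu$ sends $U^-_\chii$ into $\xi_\chii\Lam^\#$ and $\Lam^\#$ is a domain, dividing by $\xi_\chii$ defines a map $\ell:U^-_\chii\to\Lam^\#$ with $\nu|_{U^-_\chii}=\xi_\chii\cdot\ell$, so the left square commutes; in fact $\ell$ is an isomorphism, as $\nu|_{U^-_\chii}$ is injective with image exactly $\xi_\chii\Lam^\#$. The same containment shows $\nu$ vanishes on $U^-_\chii$ modulo $\xi_\chii$, hence descends to $\bar\nu:X^-_\chii\to\Lam^\#/\xi_\chii$ making the right square commute, and the rows are exact by the first step. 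The main obstacle is the equality $\nu(U^-_\chii)=\xi_\chii\Lam^\#$ of the third paragraph: the rest is bookkeeping, but this is precisely an incarnation of the Main Conjecture (cyclotomic units compute the characteristic ideal of the class group), so the reflection and the freeness/perfectness inputs for the local and cyclotomic units are the points that must be nailed down.
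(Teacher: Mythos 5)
Your proof is correct, but it reaches the key identity by a genuinely different route than the paper. The paper makes the left vertical arrow completely explicit: it takes it to be the Coleman-map isomorphism $\iota\tau\epsilon\, Col: U^-_\chii \to \Lam^\#$, takes the bottom-left horizontal arrow to be multiplication by $\iota\tau\xi_\chii$, and verifies commutativity pointwise via the Coleman--Perrin-Riou explicit reciprocity law, which gives the formula $(\cyc,u)_{Kum}= Col(\cyc)\cdot\iota\tau\epsilon\,Col(u)$; Iwasawa's Theorem ($\iota\tau Col(\cyc)=\xi_\chii$) then finishes. You instead avoid explicit reciprocity altogether: you use only the perfectness of the Kummer pairing $U^+_\theta\times U^-_\chii\to\Lam^\#$ together with freeness of rank one of both local-unit eigenspaces to identify the image of $\nu|_{U^-_\chii}$ with the ideal generated by the coordinate of $\cyc$, i.e.\ with $\mathrm{char}_\Lam(U^+_\theta/C^+_\theta)=(\xi_\chii)$ (Iwasawa's Theorem plus the Main Conjecture, which is exactly the content of Theorem \ref{iwasawa's theorem} here), and then manufacture the left vertical map by dividing by $\xi_\chii$. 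Your argument is softer and suffices for the lemma as stated --- and you are in fact more careful than the paper about the exactness of the top row, supplying the needed vanishing $E^-_\chii=0$ --- but it buys strictly less: the paper's explicit description of the left arrow as $Col$ is precisely what is used in the proof of Lemma \ref{extseq} to identify the kernel of $\mathrm{E}^1(\bnu)$ with $E^+_\theta/C^+_\theta$, so with your version one would have to re-extract that identification (it does follow, since your $\ell$ corresponds under duality to the element $\xi_\chii^{-1}$-times-$\cyc$, but it is an extra step). The remaining caveats you flag yourself --- the $\iota\tau$/$\#$ bookkeeping in where exactly $\xi_\chii$ versus $\iota(\xi_\chii)$ appears --- are real but harmless, and the paper has to handle the same twists.
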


Thus the map $\nu(-1)$ induces a map $\bnu: \Xc(-1) \to \Lam^\#/ \tau \xi_\chii$. By Corollary \ref{Xadjoints}, we have $\text{E}^1(\bnu): \Lam/\iota \tau \xi_\chii \to \X^+_\theta$. The next lemma describes the cokernel of $\text{E}^1(\bnu)$.

\begin{lem}\label{extseq}
There is a natural exact sequence:
$$\xymatrix{
 \Lam/\iota \tau \xi_\chii \ar[r]^-{E^1(\bnu)} & \X_\theta^+ \ar[r] & X_\theta^+ \ar[r] & 0.
}$$
\end{lem}

\begin{cor}\label{injective}
The following are equivalent:
\begin{abcs}
\item $X_\theta^+$ is finite.
\item $E^1(\bnu)$ is injective.
\end{abcs}
\end{cor}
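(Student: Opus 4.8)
The plan is to read off all three equivalences from the single four-term exact sequence of Lemma~\ref{extseq},
$$
0 \to E_\theta^+/C_\theta^+ \to \Lam/\xi_\chii \xrightarrow{E^1(\bnu)} \X_\theta^+ \to X_\theta^+ \to 0,
$$
together with the standard behaviour of characteristic ideals. The equivalence of (2) and (3) is immediate: by exactness the kernel of $E^1(\bnu)$ is the image of $E_\theta^+/C_\theta^+$, and since the first map is injective that image is isomorphic to $E_\theta^+/C_\theta^+$; hence $E^1(\bnu)$ is injective precisely when $E_\theta^+/C_\theta^+=0$.

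For the remaining equivalence I would first pin down the characteristic ideal of $\X_\theta^+$. By Corollary~\ref{Xadjoints} we have $\X_\theta^+ = E^1(X^-_\chii)$, and the Iwasawa adjoint preserves the characteristic ideal of a finitely generated torsion module (cf. \cite[section 5.4]{nsw}); since $X^-_\chii$ has characteristic ideal $(\xi_\chii)$ by definition of $\xi_\chii$, it follows that $\X_\theta^+$ and $\Lam/\xi_\chii$ have the same characteristic ideal $(\xi_\chii)$. Recall from the proof of Corollary~\ref{Xadjoints} that $\X_\theta^+$ is torsion, so all four terms of the sequence are finitely generated torsion $\Lam$-modules and characteristic ideals are multiplicative along it:
$$
\text{char}(E_\theta^+/C_\theta^+)\cdot \text{char}(\X_\theta^+) = \text{char}(\Lam/\xi_\chii)\cdot \text{char}(X_\theta^+).
$$
Cancelling the common factor $(\xi_\chii)$ (legitimate since $\Lam$ is a domain and $\xi_\chii \ne 0$) gives $\text{char}(E_\theta^+/C_\theta^+) = \text{char}(X_\theta^+)$. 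As $\Lam$ is a two-dimensional regular local ring with finite residue field, a finitely generated torsion $\Lam$-module is finite if and only if its characteristic ideal is the unit ideal; hence $X_\theta^+$ is finite if and only if $E_\theta^+/C_\theta^+$ is finite, which in particular gives the implication (2) $\Rightarrow$ (1).

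The one genuinely nontrivial step --- and the one I expect to be the main obstacle --- is to upgrade "finite" to "zero" for the implication (1) $\Rightarrow$ (2). For this I would exploit that $E_\theta^+/C_\theta^+$ is a \emph{submodule} of $\Lam/\xi_\chii$. We may assume $\xi_\chii$ is a nonunit, since otherwise $\Lam/\xi_\chii=0$ and all three conditions hold trivially; then $\xi_\chii$ is a nonzero nonunit, hence a nonzerodivisor, so $\Lam/\xi_\chii$ is Cohen--Macaulay of dimension one and has no embedded primes. Equivalently, its maximal finite submodule $T_0(\Lam/\xi_\chii)$ vanishes, so every finite submodule of $\Lam/\xi_\chii$ is zero. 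Therefore the finiteness of $E_\theta^+/C_\theta^+$ forces $E_\theta^+/C_\theta^+=0$, completing (1) $\Rightarrow$ (2). Combined with the first paragraph, all three conditions are equivalent; the only routine inputs left are the cited facts about characteristic ideals (multiplicativity in exact sequences and invariance under the adjoint) from \cite{nsw}.
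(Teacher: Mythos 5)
Your proof is correct and follows the same overall skeleton as the paper's: (2) $\Leftrightarrow$ (3) read off from Lemma \ref{extseq}, and the upgrade from ``finite'' to ``zero'' via the observation that $E_\theta^+/C_\theta^+$ embeds into $\Lam/\xi_\chii$, which has no nonzero finite submodules --- that last step is verbatim the paper's. The one place you genuinely diverge is in establishing $\text{char}(E_\theta^+/C_\theta^+)=\text{char}(X_\theta^+)$: the paper simply cites the Iwasawa main conjecture for this equality, whereas you derive it from multiplicativity of characteristic ideals along the four-term sequence together with the claim that $\X_\theta^+=\text{E}^1(X^-_\chii)$ has characteristic ideal $(\xi_\chii)$. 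Your route is not circular (the main conjecture is already embedded in Lemma \ref{extseq} through Iwasawa's Theorem \ref{iwasawa's theorem} on Coleman images of cyclotomic units), and it has the virtue of using only inputs already on the table; its cost is the extra input that the Iwasawa adjoint preserves characteristic ideals, which with the paper's twisted module structure on $\text{E}^1$ is only true up to the involution $\iota$ --- the twists cancel against the implicit $\iota\tau$ in the middle term $\Lam/\xi_\chii$ of the sequence, but this bookkeeping deserves a sentence if you want the cancellation of ``the common factor'' to be airtight. The paper's direct appeal to the main conjecture sidesteps that entirely. Your separate handling of the degenerate case $\xi_\chii\in\Lam^\times$ is a harmless addition.
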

\begin{proof}
Since the characteristic ideal of $\X_\theta^+$ is $(\iota \tau \xi_\chii)$, the characteristic ideals of $X_\theta^+$ and $\ker(\text{E}^1(\bnu))$ are equal. Thus  $X^+_\theta$ is finite if and only if  $\ker(\text{E}^1(\bnu))$ is finite. But $\ker(\text{E}^1(\bnu))$ injects into $\Lam/\iota \tau \xi_\chii$, which has no non-zero finite submodules, so $\ker(\text{E}^1(\bnu))$ is finite if and only if $\ker(\text{E}^1(\bnu))=0$. 
\end{proof}

We leave the proofs of Lemmas \ref{dia} and \ref{extseq} to the next section. Using these lemmas, we get the following result.

\begin{prop} \label{cokers}
\begin{abcs}
\item \label{ker} We have $E^1(coker(\bnu)) = ker(E^1(\bnu)).$
\item Assume $X_\theta^+$ is finite. Then we have $\text{coker}(\text{E}^1(\bnu))=\text{coker}(\bnu)^\vee$.
\end{abcs}
\end{prop}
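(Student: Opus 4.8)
The plan is to reduce both statements to the behaviour of the functor $E^\bullet=\Ext^\bullet_\Lam(-,\Lam)$ on the two short exact sequences obtained by factoring $\bnu$ through its image,
$$0 \to \ker(\bnu) \to \Xc \to \mathrm{im}(\bnu) \to 0, \qquad 0 \to \mathrm{im}(\bnu) \to \Lam^\#/\xi_\chii \to \mathrm{coker}(\bnu) \to 0,$$
exploiting two structural facts throughout: every module in sight is $\Lam$-torsion, so $E^0$ of each vanishes, and $\Lam$ is a regular local ring of dimension two, so $E^i$ vanishes for $i\ge 3$. Applying $E^\bullet$ yields maps $\alpha:E^1(\Lam^\#/\xi_\chii)\to E^1(\mathrm{im}(\bnu))$ and $\beta:E^1(\mathrm{im}(\bnu))\to E^1(\Xc)$ whose composite is $E^1(\bnu)$.

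For part (1) I would argue purely formally. Since $E^0(\mathrm{im}(\bnu))=0$, the long exact sequence of the second displayed sequence begins $0\to E^1(\mathrm{coker}(\bnu))\to E^1(\Lam^\#/\xi_\chii)\to E^1(\mathrm{im}(\bnu))$, the last map being $\alpha$; hence $\ker(\alpha)=E^1(\mathrm{coker}(\bnu))$. Since $E^0(\ker(\bnu))=0$, the long exact sequence of the first displayed sequence shows that $\beta$ is injective. As $E^1(\bnu)=\beta\circ\alpha$ with $\beta$ injective, $\ker(E^1(\bnu))=\ker(\alpha)=E^1(\mathrm{coker}(\bnu))$, which is exactly (1).

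For part (2) I would first deduce that $\mathrm{coker}(\bnu)$ is finite. Under the hypothesis, Corollary \ref{injective} gives that $E^1(\bnu)$ is injective, so $\ker(E^1(\bnu))=0$; by (1) this says $E^1(\mathrm{coker}(\bnu))=0$, and then Proposition \ref{e and t}(3) forces $\mathrm{coker}(\bnu)/T_0(\mathrm{coker}(\bnu))$ to be free, hence zero since it is torsion, so $\mathrm{coker}(\bnu)$ is finite. Consequently Proposition \ref{e and t}(4) gives $E^2(\mathrm{coker}(\bnu))=T_0(\mathrm{coker}(\bnu))^\vee=\mathrm{coker}(\bnu)^\vee$, and the goal reduces to the identification $\mathrm{coker}(E^1(\bnu))=E^2(\mathrm{coker}(\bnu))$. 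Because $\Lam^\#/\xi_\chii$ has no nonzero finite submodule we have $E^2(\Lam^\#/\xi_\chii)=0$, so the same long exact sequence also gives $\mathrm{coker}(\alpha)=E^2(\mathrm{coker}(\bnu))$. Splicing this with the injectivity of $\beta$ produces a short exact sequence
$$0 \to E^2(\mathrm{coker}(\bnu)) \to \mathrm{coker}(E^1(\bnu)) \to \mathrm{coker}(\beta) \to 0.$$

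The hard part is to show $\mathrm{coker}(\beta)=0$; this is where arithmetic input, rather than pure homological algebra, is needed. The long exact sequence of the first displayed sequence realizes $\mathrm{coker}(\beta)$ as a submodule of $E^1(\ker(\bnu))$, and since $\ker(\bnu)$ is torsion, Proposition \ref{e and t}(2) shows $E^1(\ker(\bnu))$ has no nonzero finite submodule. On the other hand Lemma \ref{extseq} identifies $\mathrm{coker}(E^1(\bnu))$ with $X^+_\theta$, which is finite by hypothesis, so its quotient $\mathrm{coker}(\beta)$ is finite; a finite submodule of $E^1(\ker(\bnu))$ must vanish. Hence $\mathrm{coker}(\beta)=0$ and $\mathrm{coker}(E^1(\bnu))=E^2(\mathrm{coker}(\bnu))=\mathrm{coker}(\bnu)^\vee$, proving (2). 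I expect this vanishing to be the only genuine subtlety, since $\ker(\bnu)$ need not itself be finite (a characteristic-ideal computation shows it can be nonzero precisely when $\xi_\chii$ fails to be $\iota$-invariant), so one cannot simply argue $E^1(\ker(\bnu))=0$ outright; it is the finiteness of $X^+_\theta$ that kills the potentially infinite tail $\mathrm{coker}(\beta)$.
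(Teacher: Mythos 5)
Your argument is correct and follows essentially the same route as the paper: the same factorization of $\bnu$ through its image into two short exact sequences, the same vanishing statements from Proposition \ref{e and t}, and the same arithmetic input (Corollary \ref{injective}, plus Lemma \ref{extseq} and the finiteness of $X^+_\theta$ to produce a finite module that must vanish for lack of finite submodules). The only difference is organizational: the paper uses that finiteness to prove $\ker(\bnu)=0$ outright and then reads part (2) off the second sequence directly, whereas you keep $\ker(\bnu)$ in play and instead kill $\mathrm{coker}(\beta)$ as a finite submodule of $E^1(\ker(\bnu))$ --- which amounts to the same thing.
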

\begin{proof}

For ease of notation, we let $\mathcal{B} \subset \Lam^\#/\tau \xi_\chii$ be the image of $\bnu$, $\mathcal{K}=\text{ker}(\bnu)$, and $\mathcal{C}=\text{coker}(\bnu)$. 

Applying the Iwasawa adjoint to the sequence
$$
0 \to \mathcal{K} \to X_\chii^-(-1) \to \mathcal{B} \to 0
$$
we get
\[\xymatrix{
0  \ar[r] & \text{E}^1(\mathcal{B}) \ar[r] & \text{E}^1(X_\chii^-(-1)) \ar[r] & \text{E}^1(\mathcal{K}) \ar[r] & 0
}\]
noting that $\text{E}^0(\mathcal{K})=\text{E}^2(\mathcal{B})=0$.

Applying the Iwasawa adjoint to the sequence
$$
0 \to \mathcal{B} \to \Lam^\#/\tau \xi_\chii \to \mathcal{C} \to 0
$$
we get
$$\xymatrix{
0 \ar[r] & \text{E}^1(\mathcal{C}) \ar[r] & \text{E}^1(\Lam^\#/\tau \xi_\chii) \ar[r] & \text{E}^1(\mathcal{B}) \ar[r] & \text{E}^2(\mathcal{C}) \ar[r] & 0,
}$$
noting that $\text{E}^0(\mathcal{B})=\text{E}^2(\Lam^\#/\xi_\chii)=0$. 

1) Since the composition $\text{E}^1(\Lam^\#/\tau \xi_\chii) \to \text{E}^1(\mathcal{B}) \to \text{E}^1(X_\chii^-(-1))$ is $\text{E}^1(\bnu)$, we see that $\text{ker}(\text{E}^1(\bnu)) = \text{E}^1(\mathcal{C})$.

2) By Corollary \ref{injective} and the above part (1), we see $\text{E}^1(\mathcal{C})=0$. By Proposition \ref{e and t}, this implies that $\mathcal{C}$ is finite and $\text{E}^2(\mathcal{C})=\mathcal{C}^\vee$.

Putting this together, we get a commutative diagram with exact rows
\
$$\xymatrix{
0 \ar[r] & \Lam/\iota \tau \xi_\chii \ar[r]^-{\text{E}^1(\bnu)} \ar[d] & \X_\theta^+ \ar[r] \ar@{=}[d] & X_\theta^+ \ar[r] \ar[d] & 0 \\
0  \ar[r] & \text{E}^1(\mathcal{B}) \ar[r]^-{\text{E}^1(\bnu)} & \text{E}^1(X_\chii^-(-1)) \ar[r] & \text{E}^1(\mathcal{K}) \ar[r] & 0
}$$
which implies that $\text{E}^1(\mathcal{K})$ is a quotient of $X_\theta^+$, and is thus finite. By Proposition \ref{e and t} we have that $T_1(\mathcal{K})$ is finite, but $\mathcal{K} \subset X_\chii^-(-1)$ is torsion, so $\mathcal{K}$ is finite. But $X_\chii^-(-1)=\text{E}^1(\X_\theta^+)$, so $X_\chii^-(-1)$ has no finite submodules -- thus $\mathcal{K}=0$.

Finally, the sequence
$$\xymatrix@1{
0 \ar[r] & X_\chii^-(-1) \ar[r]^{\bnu} & \Lam^\#/\tau \xi_\chii \ar[r] & \mathcal{C} \ar[r] & 0}
$$
yields
$$\xymatrix@1{
0 \ar[r] & \text{E}^1(\Lam^\#/\tau \xi_\chii) \ar[r]^{\text{E}^1(\bnu)} & \text{E}^1(X_\chii^-(-1)) \ar[r] & \text{E}^2(\mathcal{C}) \ar[r] & 0}
$$
so $\mathcal{C}^\vee = \text{E}^2(\mathcal{C}) = \text{coker}(\text{E}^1(\bnu)) = X_\theta^+$.
\end{proof}

\subsection{} We now explain how Proposition \ref{cokers} implies our main result, Theorem \ref{main}.

\begin{thm}
Suppose $X^-_\chi \ne 0$. If $\fH$ is Gorenstein, then $X_\theta^+=0$.
\end{thm}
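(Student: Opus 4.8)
The plan is to traverse the outer boundary of diagram (*): use the Gorenstein hypothesis to force the cyclotomic-units map $\nu'$ to be surjective (via Sections 2--3), and then convert that surjectivity into the vanishing $X_\theta^+=0$ through the Iwasawa-adjoint calculus of Section 4. The hypothesis $X_\chi^-\neq0$ will enter precisely as the statement that $\xi$ is a non-unit, which is what lets the $\Lam$-adic Nakayama lemma do its work. First I would show that $\fH$ Gorenstein implies $\nu'$ surjective. Proposition \ref{surj} gives that $\Phi$ is surjective. Since $\fH$ is Gorenstein, $\U$ is an isomorphism; combined with the facts that $\fQ$ is free of rank one over $\h/I$ and that $I$ annihilates $\fP$, the evaluation map $\Hom_\h(\fQ,\fP)\to\fP$ is an isomorphism, so the bottom arrow $X_\chi^-\to\fP$ of (*) is an isomorphism. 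By commutativity (Proposition \ref{comm}), $\Phi=\U\circ(\text{mult})\circ(\nu'\otimes1)$, so surjectivity of $\Phi$ forces $(\text{mult})\circ(\nu'\otimes1)$ to be surjective, i.e. $\text{im}(\nu')\cdot X_\chi^-=X_\chi^-$. As $X_\chi^-\neq0$ is finitely generated over the local ring $\Lam^\#/\xi$, Nakayama promotes this to $\text{im}(\nu')=\Lam^\#/\xi$, i.e. $\nu'$ is surjective.

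Next I would transport this surjectivity down to $\bnu$. That $\nu'$ is surjective says $\text{im}(\nu)+\xi\Lam^\#=\Lam^\#$; since $\xi$ is a non-unit (this is where $X_\chi^-\neq0$ is used a second time), Nakayama over $\Lam$ gives that $\nu$ itself is surjective. Feeding this through the commutative diagram of Lemma \ref{dia}, in which $\bnu$ is the map induced by $\nu$ on the cokernels of the two rows, shows that $\bnu$ is surjective, that is, $\text{coker}(\bnu)=0$.

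Finally I would extract $X_\theta^+=0$. Part (1) of Proposition \ref{cokers} gives $\text{E}^1(\text{coker}(\bnu))=\text{ker}(\text{E}^1(\bnu))$, so $\text{coker}(\bnu)=0$ forces $\text{E}^1(\bnu)$ to be injective; by Corollary \ref{injective} this shows $X_\theta^+$ is finite. With finiteness in hand I may now apply part (2) of Proposition \ref{cokers}, obtaining $\text{coker}(\text{E}^1(\bnu))=\text{coker}(\bnu)^\vee=0$. Since Lemma \ref{extseq} identifies $\text{coker}(\text{E}^1(\bnu))$ with $X_\theta^+$, this yields $X_\theta^+=0$.

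The main obstacle is the ordering in the last step. Proposition \ref{cokers}(2) is only licensed once $X_\theta^+$ is known to be finite, yet that finiteness is itself deduced from $\text{coker}(\bnu)=0$ via part (1) and Corollary \ref{injective}; the argument must establish finiteness first and only then read off vanishing, since a priori $\text{coker}(\bnu)=0$ controls the kernel, not the cokernel, of $\text{E}^1(\bnu)$. A lesser but necessary check is that the $\fQ$-side evaluation really is an isomorphism, so that the surjectivity of $\Phi$ descends to the genuine surjectivity of $\nu'$ rather than to the weaker statement $\text{im}(\nu')\cdot X_\chi^-=X_\chi^-$; here the non-vanishing hypothesis, through Nakayama, is exactly what closes the gap.
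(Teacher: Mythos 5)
Your proposal is correct and follows essentially the same route as the paper's own proof: Gorenstein $\Rightarrow$ $\Phi$ surjective and $\U$ an isomorphism $\Rightarrow$ $\nu'$ (hence $\nu$, hence $\bnu$) surjective via Nakayama using $X_\chi^-\neq0$ and $\xi\notin\Lam^\times$, then Proposition \ref{cokers}(1) and Corollary \ref{injective} to get finiteness of $X_\theta^+$ before invoking Proposition \ref{cokers}(2) and Lemma \ref{extseq}. Your explicit attention to the ordering of the finiteness step and to the Nakayama argument closing the gap between $\mathrm{im}(\nu')\cdot X_\chi^-=X_\chi^-$ and genuine surjectivity of $\nu'$ matches (and slightly elaborates) what the paper does.
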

\begin{proof}
Recall the diagram

\begin{equation*}\tag{*}
\xymatrix{
\X^-_\chii \otimes X_\chi^- \ar[d]^{\nu' \otimes \U} \ar[r]^-{\Th \otimes \U} & \Hom_\h (\fR,\fQ) \otimes \Hom_\h (\fQ,\fP) \ar[d] \\
 (\Lambda/\xi)^\#(1) \otimes \Hom_\h (\fQ,\fP)  \ar[r] & \fP(1).
}
\end{equation*}
This is commutative by Proposition \ref{comm}. Since $\fH$ is Gorenstein, $\U$ is an isomorphism (\cite{cong}, cf. \cite[Proposition 4.10]{sharifi}), and Proposition \ref{surj} implies that the map $\X^-_\chii \otimes X^-_\chi \to \fP(1)$ is surjective. Thus $\nu' \otimes \Upsilon$ is surjective, and, since $X^-_\chi \ne 0$, $\nu'$ is surjective.

By Nakayama's lemma, $\nu': \X_\chii^- \to (\Lam/\xi)^\#(1)$ is surjective if and only if $\nu: \X_\chii^- \to \Lam^\#(1)$ is surjective (note that $\xi \notin \Lam^\times$ because $X^-_\chi \ne 0$, cf. Remark \ref{mainrem}). If $\nu$ is surjective, then $\bnu$ is surjective.

We now have that $\bnu$ is surjective. Then by Proposition \ref{cokers} part (1), we have that $\text{E}^1(\bnu)$ is injective, and so $X^+_\theta$ is finite by Corollary \ref{injective}. We may now apply Proposition \ref{cokers} part (2), which implies that $\text{coker}(\text{E}^1(\bnu))=0$. But $\text{coker}(\text{E}^1(\bnu))=X_\theta^+$ by Lemma \ref{extseq}.
\end{proof}

This completes the proof of part (1) of Theorem \ref{main}. 

\begin{thm}\label{converse}
If $X^+_\theta=0$ and $\X_\theta^+ \ne 0$, then $\nu'$ is surjective. If, in addition, $\U$ is surjective, then $\Phi$ is surjective and, in particular, $\fH$ is Gorenstein.
\end{thm}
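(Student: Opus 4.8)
The plan is to prove the two assertions separately: the surjectivity of $\nu'$ is obtained from $X_\theta^+=0$ by running the Iwasawa-adjoint machinery of Proposition \ref{cokers} backwards, and the Gorenstein conclusion is then read off from the commutative diagram (*) once $\U$ is also surjective. Throughout I work in the local Iwasawa algebra $\Lam$, and the hypothesis $\X_\theta^+\ne 0$ enters only to guarantee that the relevant characteristic power series are non-units: by Corollary \ref{Xadjoints} we have $X_\chii^-\cong \text{E}^1(\X_\theta^+)\ne 0$, so $\xi_\chii\notin\Lam^\times$, and by the main conjecture together with reflection (as in Remark \ref{mainrem}) also $X_\chi^-\ne 0$, whence $\xi\notin\Lam^\times$. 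Thus both $\xi$ and $\xi_\chii$ lie in the maximal ideal $\mathfrak{m}$ of $\Lam$, which is what makes the Nakayama reductions below legitimate.

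First I would deduce that $\bnu$ is surjective. Since $X_\theta^+=0$ is in particular finite, Proposition \ref{cokers}(2) applies and gives $\text{coker}(\bnu)^\vee=\text{coker}(\text{E}^1(\bnu))$. By the exact sequence of Lemma \ref{extseq}, the group $\text{coker}(\text{E}^1(\bnu))$ is exactly $X_\theta^+$, which is zero by hypothesis. Therefore $\text{coker}(\bnu)^\vee=0$, so $\text{coker}(\bnu)=0$ and $\bnu\colon X_\chii^-\to\Lam^\#/\xi_\chii$ is surjective. (If one prefers, Corollary \ref{injective} first records that $\text{E}^1(\bnu)$ is injective, which is the input used inside Proposition \ref{cokers}(2).)

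Next I would pass from $\bnu$ to $\nu'$. The point is that $\nu'$ and $\bnu$ are both induced by $\nu$ (Lemma \ref{dia}), and, because $\xi,\xi_\chii\in\mathfrak{m}$, both $\Lam^\#/\xi$ and $\Lam^\#/\xi_\chii$ reduce modulo $\mathfrak{m}$ to the residue field $\Lam^\#/\mathfrak{m}$. Using the surjection $\X_\chii^-\twoheadrightarrow X_\chii^-$, the maps $\nu'\otimes\Lam/\mathfrak{m}$ and $\bnu\otimes\Lam/\mathfrak{m}$ are identified with one and the same map $\X_\chii^-/\mathfrak{m}\to\Lam^\#/\mathfrak{m}$. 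By Nakayama's lemma each of $\nu'$ and $\bnu$ is surjective if and only if this common reduction is surjective; since $\bnu$ is surjective, so is $\nu'$. This proves the first assertion.

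For the second assertion, assume in addition that $\U$ is surjective and consider diagram (*). The left vertical map $\nu'\otimes 1$ is surjective by right-exactness of the tensor product, the unlabeled natural map $\Lam^\#/\xi\otimes X_\chi^-\to X_\chi^-$ is visibly surjective (its image contains every $1\otimes x\mapsto x$), and $\U\colon X_\chi^-\to\fP$ is surjective by hypothesis; hence the composite $\X_\chii^-\otimes X_\chi^-\to\fP$ along the bottom of (*) is surjective. By Proposition \ref{comm} this composite coincides with $\Phi$, so $\Phi$ is surjective, and Proposition \ref{surj} then yields that $\fH$ is Gorenstein. \textbf{The main obstacle} is the middle step: transferring surjectivity across the two different reductions $\Lam^\#/\xi_\chii$ (the natural target of $\bnu$ produced by the adjoint computation) and $\Lam^\#/\xi$ (the target of $\nu'$ occurring in the diagram). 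This requires both the reflection input that makes $\xi$ and $\xi_\chii$ simultaneously non-units and the bookkeeping that $\nu'$, $\nu$, and $\bnu$ all collapse to the same map modulo $\mathfrak{m}$; once this is in place, everything else is a formal consequence of the results already established.
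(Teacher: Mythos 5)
Your proof is correct and follows essentially the same route as the paper: $X_\theta^+=0$ forces $\bnu$ to be surjective via Lemma \ref{extseq} and Proposition \ref{cokers}, Nakayama's lemma (using $\xi_\chii\notin\Lam^\times$, which is what $\X_\theta^+\ne0$ provides through Corollary \ref{Xadjoints}) lifts this to $\nu$ and hence to $\nu'$, and the second assertion is then read off from diagram (*) together with Propositions \ref{comm} and \ref{surj}. One small remark: your detour through the claim that $\xi\notin\Lam^\times$ (justified by an appeal to ``reflection'' that the paper does not supply -- Corollary \ref{Xadjoints} relates $\X_\theta^+$ to $X_{\chi^{-1}}^-$, not to $X_\chi^-$) is both shaky and unnecessary, since once $\nu:\X_\chii^-\to\Lam^\#$ is surjective, $\nu'$ is surjective simply as the composite of $\nu$ with the projection $\Lam^\#\to\Lam^\#/\xi$, with no condition on $\xi$ needed.
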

\begin{proof}
It is immediate from Lemma \ref{extseq} and Proposition \ref{cokers} that $X^+_\theta=0$ implies that $\bnu$ is surjective. From Corollary \ref{Xadjoints} we see that the assumption $\X_\theta^+ \ne 0$ is equivalent to $\xi_\chii \notin \Lam^\times$. By Nakayama's lemma, we have that $\nu$ is surjective, and thus that $\nu'$ is surjective. The second statement now follows from the commutative diagram (*) and Proposition \ref{surj}.
\end{proof}

For the proof of part (2) of Theorem \ref{main}, note that if $\X^+_\theta=0$, then $\fH$ is Gorenstein by Otha's Theorem \ref{ohtaresult}. Otherwise $\X^+_\theta \ne 0$, and we may apply Theorem \ref{converse}. This completes the proof of Theorem \ref{main}.

\section{Coleman power series and explicit reciprocity}

The purpose of this section is to prove Lemmas \ref{dia} and  \ref{extseq}. The motivation for the method of proof is that the map $\text{E}^1(\bnu)$ should coincide with the composite
$$
\Lam/\iota \tau \xi_\chii \to U_\theta^+/C_\theta^+ \to \X_\theta^+
$$
where $C$ is the group of cyclotomic units, $\Lam/\iota \tau \xi_\chii \to U_\theta^+/C_\theta^+$ is the isomorphism of Iwasawa's Theorem defined by Coleman power series, and $U_\theta^+/C_\theta^+ \to \X_\theta^+$ is the natural map from the sequence (\ref{euxx}). We first review the theory of Coleman power series and the relation to explicit reciprocity.

\subsection{} Let $\fO=\Z[\zeta_N] \otimes \Z_p$. Let $Fr \in \Gal(\Q(\zeta_N)/\Q)$ be the Frobenius at $p$ and $\phi \in \text{End}(\fO[[T]])$ be defined as $Fr$ on $\fO$ and $\phi(1-T)=(1-T)^p$. Let $a \in \Z_{p}^\times$ act on $\fO[[T]]$ by $a.(1-T)=(1-T)^{a}$  and extend this to an action of $\fO[[\Z_{p}^\times]]$. Let $\epsilon : \fO[[\Z_{p}^\times]] \to \fO[[\Z_{p}^\times]]$ be the map induced by $a \mapsto -a$ for $ a \in \Z_{p}^\times$.

 We have the Coleman map
$$
Col:U \to \fO[[\Z_{p}^\times]]
$$
defined for $u=(u_n)\in U$ to be the unique element such that $(1-\phi/p)log(f_u)=Col(u).(1-T)$, where $f_u$ is the unique element of $\fO[[T]]$ such that $f_u(1-\zeta_{p^{n+1}})=Fr^n(u_n)$ for all $n$. It induces isomorphisms
$$
Col: U^+_\theta \to \Lam \ , \ \iota \tau Col: U^-_\chii \to \Lam^\#(1).
$$

The following theorem is known as Iwasawa's Theorem. For a proof, see \cite[Lemma 2.11 and Theorem 2.13]{greither}.

\begin{thm}\label{iwasawa's theorem}
The element $\cyc$ is a generator of $C^+_\theta$ as a $\Lambda$-module.  We have  that $\iota \tau(Col(C^+_\theta))=\xi_\chii\Lam$
\end{thm}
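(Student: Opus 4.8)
The statement is classical, combining work of Iwasawa, Coleman and Mazur--Wiles (and is exactly the form packaged in \cite{greither}); the plan is to establish it in three steps matching its three assertions, the last of which is purely formal once the first two are in hand. I would first prove the generation statement, that $\cyc$ generates $C^+_\theta$ over $\fO[[\Z_{p,N}^\times]]$. At each level $Np^r$ the cyclotomic units are, by their definition, generated as a Galois module by the conjugates of $1-\zeta_{Np^r}$, the relations among them being exactly the distribution (norm) relations. Passing to the inverse limit along the norm maps and projecting to the $\theta$-eigenspace, the primitivity of $\theta$ ensures that these relations impose no further constraint, so $C^+_\theta$ is cyclic with generator the image $\cyc$ of $(1-\zeta_{Np^r})_r$.

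The heart of the matter is the second assertion, $\iota\tau(Col(\cyc))=\xi_\chii$, and this is where the real work lies. I would begin by computing the Coleman power series $f_u$ attached to $u=\cyc$: with the normalization $\phi(1-T)=(1-T)^p$ used here, the defining property $f_u(1-\zeta_{p^{n+1}})=Fr^n(u_n)$ forces $f_u$ to be an explicit linear expression in $1-T$ (with the tame conductor handled inside $\fO=\Z[\zeta_N]\otimes\Z_p$), so that $\log f_u$ is elementary. Applying the operator $(1-\phi/p)$ and reading off the coefficient of $(1-T)$ then expresses $Col(\cyc)$ as an explicit measure on $\Z_{p,N}^\times$. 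The crucial input is Coleman's explicit reciprocity computation, which matches the Mellin transform of this measure with the interpolation of Kubota--Leopoldt $p$-adic $L$-values; thus $Col(\cyc)$ is, up to the reflection built into $\iota\tau$, the $p$-adic $L$-function of the relevant odd character. The twist $\iota\tau$ is not cosmetic: the inversion $\iota$ accounts for passing to the $(-)^\#$ structure and $\tau$ for the Tate twist by $\kappa$ (compare the $\kappa^{-1}$ twists in Proposition \ref{gal}), and together they encode the functional equation relating the two normalizations. Finally, invoking the Iwasawa Main Conjecture of Mazur--Wiles, this $p$-adic $L$-function generates, after the reflection, the characteristic ideal of $X^-_\chii$; we normalize the choice of characteristic power series $\xi_\chii$ so that the equality holds on the nose.

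With these two facts in hand the third assertion is immediate. Since $\cyc$ generates $C^+_\theta$ and $Col\colon U^+_\theta\to\Lam$ is the isomorphism of Iwasawa's theorem, we have $Col(C^+_\theta)=\Lam\cdot Col(\cyc)$. Because $\iota\tau$ is a ring automorphism of $\Lam$, applying it yields $\iota\tau(Col(C^+_\theta))=\Lam\cdot\iota\tau(Col(\cyc))=\xi_\chii\Lam$.

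The main obstacle is the explicit reciprocity step of the second paragraph: pinning down the Coleman power series of the cyclotomic unit and identifying the resulting measure with $p$-adic $L$-values requires a careful logarithmic-derivative computation together with correct bookkeeping of the $\iota\tau$ reflection, while the passage from the $p$-adic $L$-function to the characteristic ideal of $X^-_\chii$ uses the full strength of the Main Conjecture. As both ingredients are available in \cite{greither}, in practice one quotes Iwasawa's theorem in the form stated there rather than reproving it here.
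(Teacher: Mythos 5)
Your proposal is correct and ends up exactly where the paper does: the paper's entire ``proof'' of this theorem is the citation to \cite[Lemma 2.11 and Theorem 2.13]{greither}, which is precisely what you invoke in your final paragraph (your intermediate sketch --- generation via the norm relations and primitivity of $\theta$, the logarithmic-derivative computation of $Col(\cyc)$ identifying it with the $p$-adic $L$-function, and the Main Conjecture plus a choice of normalization of $\xi_\chii$ to pin down the characteristic power series --- is a faithful summary of what that reference supplies). No discrepancy to report.
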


\begin{rem}\label{choice of xi}
In light of this theorem, we see that $\iota \tau (Col(\cyc))$ is a characteristic power series for $X_\chii^-$, so we may choose $\xi_\chii=\iota \tau (Col(\cyc))$.
\end{rem}

The following explicit reciprocity law is essentially due to Coleman (\cite{coleman}).

\begin{thm}\label{explicit res}
The following diagram is commutative:

\[\xymatrixcolsep{4pc}\xymatrix{
U^+_\theta \times U^-_\chii   \ar[d]_{Col}^{ \iota \tau \epsilon Col} \ar[r]^-{( \ , \ )_{Kum}}   & \Lam^\#(1) \ar[d] \\
\Lam \times \Lam^\#(1)  \ar[r] & \Lam^\#(1). \\} \]
Here the bottom horizontal map is the product.
\end{thm}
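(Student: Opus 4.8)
The plan is to deduce this from the classical explicit reciprocity law of Coleman \cite{coleman} after translating it into the normalizations in force here. First I would recall Coleman's formula for the Hilbert symbols attached to norm-compatible systems of local units $a,b \in U$ with Coleman power series $f_a, f_b \in \fO[[T]]$: up to the choice of compatible roots of unity identifying $\mu_{p^n}$ with $\Z/p^n\Z$, one has
\[
(a,b)_n = \zeta_{p^n}^{t_n(a,b)}, \qquad t_n(a,b)=\frac{1}{p^n}\mathrm{Tr}_{\Q_p(\zeta_{p^n})/\Q_p}\!\left((\log f_a)\cdot \frac{D f_b}{f_b}\right)\Big|_{T=\zeta_{p^n}-1},
\]
where $D=(1+T)\frac{d}{dT}$. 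This trace formula is the arithmetic input; everything else is a matter of matching it to the present definitions of $Col$ and $(\ ,\ )_{Kum}$.

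Next I would unwind the definition of the Kummer pairing. Substituting Coleman's formula into the defining limit $\varprojlim_n \sum_c (\sigma_c(a),b)_n \langle c^{-1}\rangle$ and using that $\sigma_c$ acts on Coleman power series through the element $c \in \Z_{p,N}^\times$, i.e. by $(1-T)\mapsto (1-T)^{\kappa(c)}$, the averaging sum over $c$ becomes exactly the Amice--Mellin transform carrying the power series $\log f_a$ and $D\log f_b$ into elements of $\fO[[\Z_{p,N}^\times]]$. The crucial structural point is that under this transform the operator $D$ becomes multiplication by the cyclotomic character, which is precisely the operator $\tau$; thus the logarithmic-derivative factor $D\log f_b = D f_b/f_b$ is converted into $\tau$ applied to the transform of $\log f_b$, and this transform is $Col(b)$ up to the Euler factor $(1-\phi/p)$ built into the Coleman map. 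Simultaneously the transform of $\log f_a$ is $Col(a)$, again up to $(1-\phi/p)$.

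It then remains to account for the operators $\iota$ and $\epsilon$. The involution $\iota$ enters because the two slots of the Kummer pairing are not symmetric; by Lemma \ref{properties of kummer pairing}(2) one has $(b,a)_{Kum}=\iota\tau(a,b)_{Kum}$, and reading the second argument $b\in U^-_\chii$ through $\iota\tau$ is exactly what produces the isomorphism $\iota\tau Col: U^-_\chii \to \Lam$ recorded in section \ref{U and E}. The factor $\epsilon$ is then forced on us as the unique correction reconciling the two Euler-factor normalizations: it absorbs the discrepancy between the bare $\log f_b$ appearing after the transform and the $(1-\phi/p)\log f_b$ packaged into $Col(b)$, together with the passage from $\log$ to the $D$-normalized logarithmic derivative. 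Finally, projecting the first variable to the $\theta$-eigenspace and the second to the $\chii$-eigenspace (recall $\chi=\omega^{-1}\theta$) collapses the $\Z_p^g$-valued pairing onto the single copy $\Lam=\Z_p[[\Z_{p,N}^\times]]_\theta$, and the product $Col(a)\cdot \iota\tau\epsilon Col(b)$ emerges, which is the asserted commutativity.

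The main obstacle is entirely one of normalization bookkeeping rather than of new ideas: Coleman's law is stated with a bare logarithmic derivative and a trace down to $\Q_p$, whereas here $Col$ has the operator $1-\phi/p$ and the $(1-T)$-action built in and $(\ ,\ )_{Kum}$ has the averaging sum over $c$ built in. The delicate steps are verifying that $D$ really does correspond to $\tau$ under the Amice transform with no stray twist, that the Frobenius factors produced by the trace and the evaluation at $T=\zeta_{p^n}-1$ assemble into exactly the Euler factors carried by $Col$ so that the composite $\iota\tau\epsilon$ is precisely the needed correction, and that no spurious powers of $p$ or signs survive after restriction to the eigenspaces.
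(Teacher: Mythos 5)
Your overall strategy --- reduce to a classical explicit reciprocity law and then match normalizations --- is the same as the paper's, which simply invokes Perrin-Riou's Th\'eor\`eme 4.3.2 (a descendant of Coleman's formula) and identifies the vertical maps there with the Coleman maps up to the stated twists. The gap in your proposal sits exactly at the one point where the bookkeeping is delicate: the origin of the operator $\epsilon$. You assert that $\epsilon$ ``absorbs the discrepancy between the bare $\log f_b$ appearing after the transform and the $(1-\phi/p)\log f_b$ packaged into $Col(b)$, together with the passage from $\log$ to the $D$-normalized logarithmic derivative.'' Neither of these is what $\epsilon$ corrects for. The passage from $\log f_b$ to $D\log f_b$ is precisely what the operator $\tau$ accounts for under the transform (as you yourself note when you say $D$ corresponds to $\tau$), so attributing it to $\epsilon$ as well is double-counting; and the Euler factor $1-\phi/p$ is compatible with the traces $\mathrm{Tr}_{\Q_p(\zeta_{p^{n+1}})/\Q_p(\zeta_{p^n})}$ in the standard way and leaves no residual correction.

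The actual source of $\epsilon$ is the choice of compatible system of $p$-power roots of unity used to interpolate the second variable: in Perrin-Riou's diagram the two vertical maps are Coleman maps taken with respect to the mutually \emph{inverse} systems $(\zeta_{p^r})$ and $(\zeta_{p^r}^{-1})$, and if $F(1-\zeta_{p^{n+1}})=u_n$ while $G(1-\zeta_{p^{n+1}}^{-1})=u_n$, then by uniqueness $G(T)=F(1-1/(1-T))=\epsilon F$. Declaring $\epsilon$ to be ``the unique correction forced on us'' is reverse-engineering the statement rather than proving it: as written, your derivation from Coleman's trace formula (in which both power series interpolate at the same roots of unity) would produce the diagram \emph{without} $\epsilon$, i.e.\ a different and incorrect formula, and your argument contains no mechanism that would detect the discrepancy. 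To close the gap you must either track the identification $\mu_{p^r}\cong\Z/p^r\Z$ and the asymmetry of the Hilbert symbol carefully enough to see the inversion $\zeta\mapsto\zeta^{-1}$ appear in the second slot, or cite a version of the reciprocity law (such as Perrin-Riou's) in which the two normalizations are already made explicit.
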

\begin{proof}
This is proven in \cite[Th\'{e}or\`{e}me 4.3.2]{p-r}. To be more precise, let $V = \fO_{\Q_p(\zeta_N)}[[\Z_{p}^\times]].(1-T)$, $Z=\varprojlim \fO_{\Q_p(\zeta_{Np^r})}^\times$.  The commutativity of the 
diagram 
\begin{equation}\tag{P-R}
\xymatrixcolsep{4pc}\xymatrix{
Z \times Z \ar[r]^-{( \ , \ )_{Kum}} & \Z_p[[\Z_{p,N}^\times]] \ar[d] \\
V \times V \ar[u]^-{\Omega^{(\zeta_{p^r})}_{\Q_p(1),1}}_-{\Omega^{(\zeta_{p^r}^{-1})}_{\Q_p(1),1}} \ar[r] & \fO_{\Q_p(\zeta_N)}[[T]]
}
\end{equation}
is proven, where the unlabeled horizontal map is given by $$(f ,g) \mapsto f*tDg.
$$
Here, $*$, $t$, and $D$ correspond via the isomorphism $V \cong \fO_{\Q_p(\zeta_N)}[[\Z_{p}^\times]]$ to the product, $\iota$, and $\tau$, respectively. 
 
As in \cite[section 4.1]{p-r}, $(\Omega^{\varepsilon}_{\Q_p(1),1})^{-1}$ is $Col$ with respect to the system of $p$-power roots of unity $\varepsilon$.
 
Theorem \ref{explicit res} now follows by using the commutativity of the diagram (P-R) for each direct factor of $U$, $\fO$, etc., and taking the $\theta$-eigenspace (cf. Lemma \ref{properties of kummer pairing}).
 
Note that $\epsilon$ appears because, in (P-R), the two Coleman maps are taken with respect to inverse systems of roots of unity: if, for some $u \in U$, $F \in \fO[[T]]$ satisfies $F(1-\zeta_{p^{n+1}})=u_n$ and $G \in \fO[[T]]$ satisfies $G(1-\zeta_{p^{n+1}}^{-1})=u_n$, then, by uniqueness, $G(T)=F(1-1/(1-T))$, and consequently $G=\epsilon F$.
\end{proof}

\subsection{} We complete the proof of Lemmas \ref{dia} and \ref{extseq}.
\begin{proof}[Proof of Lemma \ref{dia}]
We wish to show the commutativity of the square
$$\xymatrix{
U_\chii^- \ar[r] \ar[d]^{\iota \tau \epsilon Col}_\wr & \X_\chii^- \ar[d]^\nu \\
\Lam^\#(1) \ar[r]^-{\xi_\chii} & \Lam^\#(1). \\
}$$
That is, for $u \in U^-_\chii$, we wish to show that $$\nu(u)=\iota \tau ( \xi_\chii) \iota \tau \epsilon Col(u)=\iota \tau (\xi_\chii \epsilon Col(u)).$$
By Lemma \ref{properties of kummer pairing} we have $\nu(u)=(\cyc,u)_{Kum}$. By Theorem \ref{explicit res}, we have $$(\cyc,u)_{Kum}=Col(\cyc)\iota \tau (\epsilon (Col(u))))=\iota \tau (\iota \tau Col(\cyc)) \epsilon (Col(u))).$$
We now obtain the result by applying Theorem \ref{iwasawa's theorem}

\end{proof}
\begin{proof}[Proof of Lemma \ref{extseq}] 
When we apply the Iwasawa adjoint to the complex from Lemma \ref{dia} to obtain a commutative diagram with exact rows

\[\xymatrix{
 \text{E}^0(U_\chii^-(-1)) \ar[r] & \text{E}^1(X_\chii^-(-1)) \ar[r] & \text{E}^1(\X_\chii^-(-1)) \ar[r] & 0\\
\text{E}^0(\Lam^\#) \ar[u]^\wr \ar[r] & \text{E}^1(\Lam^\#/\tau \xi_\chii) \ar[u]^{\text{E}^1(\bnu)} \ar[r] & 0. \ar[u] \\}\]

Replacing the adjoints with their isomorphic images, computed in section 4, we obtain

\[\xymatrix{
U_\theta^+ \ar[r] & \X_\theta^+ \ar[r] & X_\theta^+ \ar[r] & 0 \\
 \Lam \ar[u]^\wr \ar[r] & \Lam/\iota \tau \xi_\chii \ar[u]^{E^1(\bnu)} \ar[r] & 0 \ar[u] \\}\]

We can now read off the desired exact sequence.

\end{proof}

Department of Mathematics, Eckhart Hall, University of Chicago, Chicago, Illinois 60637

email: pwake@math.uchicago.edu
\end{document}